\newcommand{\ph}{\varphi}
\newcommand{\e}{\varepsilon}
\newcommand{\C}{\mathbb{C}}
\newcommand{\Z}{\mathbb{Z}}
\newcommand{\mf}{\mathfrak}
\newcommand{\mc}{\mathcal}
\DeclareMathOperator{\spn}{span}
\DeclareMathOperator{\snf}{snf}
\DeclareMathOperator{\code}{code}
\DeclareMathOperator{\wt}{wt}
\DeclareMathOperator{\diag}{diag}
\renewcommand{\tilde}{\widetilde}
\newtheorem{theorem}{Theorem}[section]
\newtheorem{def-prop}[theorem]{Definition-Proposition}
\newtheorem{prop}[theorem]{Proposition}
\newtheorem{lemma}[theorem]{Lemma}
\theoremstyle{definition}
\newtheorem{example}[theorem]{Example}
\theoremstyle{remark}
\newtheorem*{remark}{Remark}
\begin{document}

\title[Duality between the weak and strong orders]{A combinatorial duality between the weak and strong Bruhat orders}
\author{Christian Gaetz}
\thanks{C.G. was partially supported by an NSF Graduate Research Fellowship.}
\author{Yibo Gao}
\address{Department of Mathematics, Massachusetts Institute of Technology, Cambridge, MA.}
\email{\href{mailto:gaetz@mit.edu}{gaetz@mit.edu}} 
\email{\href{mailto:gaoyibo@mit.edu}{gaoyibo@mit.edu}} 
\date{\today}

\begin{abstract}
In recent work, the authors used an order lowering operator $\nabla$, introduced by Stanley, to prove the strong Sperner property for the weak Bruhat order on the symmetric group.  Hamaker, Pechenik, Speyer, and Weigandt interpreted $\nabla$ as a differential operator on Schubert polynomials and used this to prove a new identity for Schubert polynomials and a determinant conjecture of Stanley.  In this paper we study a raising operator $\Delta$ for the \emph{strong} Bruhat order on the symmetric group, which is in many ways dual to $\nabla$.  We prove a Schubert identity dual to that of Hamaker et al. and derive formulas for counting weighted paths in the Hasse diagrams of the strong order which agree with path counting formulas for the weak order, providing a strong order analog of Macdonald's reduced word identity.  We also show that powers of $\nabla$ and $\Delta$ have the same Smith normal forms, which we describe explicitly, answering a question of Stanley.

\smallskip
\noindent \textbf{Keywords.} weak order, Bruhat order, Schubert polynomial, duality.
\end{abstract}

\maketitle

\section{Introduction} \label{sec:intro}

The reader is referred to Section \ref{sec:background} for background and basic definitions.  Stanley \cite{Stanley2017} introduced an order lowering operator $\nabla$ for the weak (Bruhat) order $W_n$ on the symmetric group and conjectured an explicit nonvanishing formula for the determinant of 
\[
\nabla^{N - 2k}: \C (W_n)_{N-k} \to \C (W_n)_{k}.
\]
where $N={n \choose 2}$ is the rank of $W_n$.  The invertibility of this operator would imply the strong Sperner property for $W_n$, solving a problem raised by Bj\"orner \cite{Bjorner1984}.  In \cite{Gaetz2018}, the authors construct a raising operator $\Delta$, which, together with $\nabla$ determines a representation of the Lie algebra $\mathfrak{sl}_2$ on $\C W_n$, thus establishing the invertibility of $\nabla^{N - 2k}$ and the strong Sperner property.

In later work \cite{Hamaker2018}, Hamaker, Pechenik, Speyer, and Weigandt proved a new identity for derivatives of Schubert polynomials, allowing them to interpret $\nabla$ as a differential operator on the space of polynomials spanned by the Schubert polynomials $\mf{S}_w$ and thereby prove Stanley's conjecture for $\det(\nabla^{N-2k})$.

Remarkably, the operator $\Delta$ is an order raising operator supported exactly on the \emph{strong} Bruhat order $S_n$ on the symmetric group; this fact was not necessary for establishing the Sperner property of $W_n$, nor was it used for computing the determinant of $\nabla^{N-2k}$.  Our goal in this paper is to study the resulting duality between (edge-labeled versions of) $W_n$ and $S_n$.

Section \ref{sec:background} gives basic definitions and background.  Section \ref{sec:padded-Schuberts} introduces \emph{padded Schubert polynomials}: the duality between $\nabla$ and $\Delta$ takes a particularly natural form in this setting.  We deduce an identity of Schubert polynomials dual to that of Hamaker, Pechenik, Speyer, and Weigandt and use this in Section \ref{sec:path-counting} to prove weighted path-counting identities for $W_n$ and $S_n$, implying a strong order analog of Macdonald's reduced word formula \cite{Macdonald}.  These identities look similar to an identity for the \emph{Chevalley edge weights} on $S_n$ previously studied by Stembridge \cite{Stembridge2002} and Postnikov and Stanley \cite{Postnikov2009}.  In Section \ref{sec:smith-forms} we show that powers of $\nabla$ and $\Delta$ have the same Smith normal forms, which we describe in a simple way, answering a question of Stanley \cite{Stanley2017}.  This indicates that $\nabla$ and $\Delta$ determine a duality between $W_n$ and $S_n$ which is stronger than the previously observed path-counting coincidences between the $\nabla$-edge weights and the Chevalley weights.

\section{Background and definitions} \label{sec:background}
We refer the reader to \cite{Stanley2012} for basic definitions about posets in what follows.

\subsection{Order operators and edge labels} \label{sec:order-operators}
For $P$ a finite graded poset, we let $P_k$ denote the set of elements of rank $k$; we always make the convention that $P_{-1}=\emptyset$, so that $\#P_{-1}=0$ is well-defined.  For $S \subset P$, we let $\C S$ denote the vector space of formal linear combinations of elements of $S$.  A linear operator $U_k: \C P_k \to \C P_{k+1}$ (resp. $D_k: \C P_k \to \C P_{k-1}$) is called a \emph{raising operator} (resp. \emph{lowering operator}).  A raising (resp. lowering) operator is an \emph{order raising} (resp. \emph{order lowering}) operator if, when we write $U_k x = \sum_{y \in P_{k+1}} c_y y$ (or $D_k y = \sum_{x \in P_{k-1}} c_x x$) we have $c_y=0$ (or $c_x=0$) unless $x \lessdot y$.  When we have a family of such operators indexed by the rank $k$, we omit the subscripts of the operators when no confusion can result.  Given a family $U_i$ of raising operators, for $i<j$ we define
\[
U^{[i,j]}:=U_{j-1}U_{j-2}\cdots U_{i+1}U_i: \C P_i \to \C P_j,
\]
and similarly define $D^{[i,j]}:\C P_j\rightarrow\C P_i$ for lowering operators $D$.

It is clear from the definitions that an order operator $\ph$ carries the same information as a weighting of the edges in the Hasse diagram of $P$ by complex numbers.  We let $\wt_{\ph}$ denote the corresponding weight function on cover relations, and we freely move between these two forms.

Given a saturated chain $\mf{c}$ from $x \in P_i$ to $y \in P_j$, we let the weight of $\mf{c}$ be the product of the weights of the cover relations $c$ which make up the chain:
\[
\wt_{\ph}(\mf{c})=\prod_{c \in \mf{c}} \wt_{\ph}(c),
\]
and we let
\[
m_{\ph}(x,y)= \sum_{\mf{c}: x \to y} \wt(\mf{c})
\]
denote the number of weighted paths from $x$ to $y$ in the Hasse diagram of $P$, viewed as a directed graph.  It is clear that, using the natural basis $P$ for $\C P$, the matrix of $\ph^{[i,j]}$ is given by $\left( m_{\ph}(x,y) \right)_{x \in P_i, y \in P_j}$.

\subsection{The weak and strong Bruhat orders}
\label{sec:bruhat-orders}

The weak and strong (Bruhat) orders on the symmetric group $S_n$ arise from the realization of $S_n$ as a Coxeter group, and are integral to representation theory and geometry in ``type A."  

For $i=1,...,n-1$ let $s_i=(i, i+1)$ denote the simple transpositions in $S_n$, and for $1 \leq i < j \leq n$ let $t_{ij}=(i,j)$ denote general transpositions.  For $w \in S_n$, the \emph{length} $\ell(w)$ is defined to be the smallest $k$ such that $w=s_{i_1}s_{i_2} \cdots s_{i_k}$ for some choice of $i_1,...,i_k \in \{1,...,n-1\}$.  The weak order $(W_n, \leq_W)$ and strong order $(S_n, \leq_S)$ are defined by their covering relations:
\begin{itemize}
    \item $w \lessdot_W u$ if and only if $u=ws_i$ and  $\ell(u)=\ell(w)+1$,
    \item $w \lessdot_S u$ if and only if $u=wt_{ij}$ and $\ell(u)=\ell(w)+1$.
\end{itemize}
Thus the weak and strong orders share the same ground set (the symmetric group on $n$ letters) and rank structure: $(W_n)_i=(S_n)_i=\{w \in S_n \: | \: \ell(w)=i\}$.  Since the ground sets are the same, no confusion should result from our use of $S_n$ to denote both the symmetric group and the strong order.  Each order has as its unique maximal element the permutation $w_0$ with one-line notation $n,n-1,...,1$, the unique element of rank $N={n \choose 2}$, and as its unique minimal element the identity permutation $\e=1,2,...,n$, the unique permutation of length zero.

\subsection{The Smith normal form of an integer matrix}
\label{sec:def-of-SNF}
Let $A$ be an $n \times m$ integer matrix.  The \emph{Smith normal form} $B$ of $A$ is the unique $n \times m$ integer matrix with nonnegative diagonal entries $b_1,...,b_{\min(n,m)}$ such that $b_i$ divides $b_{i+1}$ for all $i$, all off-diagonal entries are 0, and $B=PAQ$ for some matrices $P \in GL_n(\Z)$ and $Q \in GL_m(\Z)$.  The action of $P$ and $Q$ can be interpreted as integer row and column operations on $A$.  We write $B=\snf(A)$.  It is clear that $\snf(A)$ does not depend on the ordering of the rows and columns of $A$, since row and column swaps are integer row and column operations.  Finally, we write $\tilde{\snf}(A)$ for the square matrix $\diag(b_1,...,b_{\min(n,m)})$ which is the Smith normal form of $A$ with extra rows and columns of zeros removed.

Since elements of $GL_n(\Z)$ have determinant $\pm 1$, if $A$ is a square matrix we have:
\[
|\det(A)|=\det(\snf(A))=\prod_i b_i.
\]
Thus the Smith normal form is a considerable refinement of the absolute value of the determinant of a square integer matrix, and a generalization to rectangular matrices.  For a survey on Smith normal forms in combinatorics, see Stanley \cite{Stanley2016}.

\section{The action of $\nabla$ and $\Delta$ on padded Schubert polynomials}
\label{sec:padded-Schuberts}
For $\alpha=(\alpha_1,...,\alpha_{n-1})$ a composition of $k$, we write $x^{\alpha}$ for the monomial $\prod_{i=1}^{n-1} x_i^{\alpha_i}$, and we let $|\alpha|:=k$.  We write $\rho$ for the staircase composition $(n-1,n-2,...,2,1)$ of $N:={n \choose 2}$.  When each part of $\alpha$ is at most the corresponding part of $\rho$, we write $\alpha \leq \rho$ and we let $\rho-\alpha$ denote the composition $(n-1-\alpha_1,n-2-\alpha_2,...,1-\alpha_{n-1})$ of $N-|\alpha|$.

The \emph{Schubert polynomials} $\mf{S}_w(x_1,...,x_{n-1})$ for $w \in S_n$ form a basis for the space $V=\spn_{\C} \{x^{\alpha} \: | \: \alpha \leq \rho \}$.  They can be defined recursively as follows:
\begin{itemize}
    \item $\mf{S}_{w_0}(x_1,...,x_{n-1})=x_1^{n-1}x_2^{n-2}\cdots x_{n-1}$, and 
    \item $\mf{S}_{s_i w}=N_i \cdot \mf{S}_w$ if $\ell(s_iw)<\ell(w)$,
\end{itemize}
where $N_i$ denotes the $i$-th \emph{Newton divided difference operator}:
\[
N_i \cdot g(x_1,...,x_{n-1}) := \frac{g-s_i \cdot g}{x_i-x_{i+1}}.
\]
Here the simple transposition $(i \: i+1)$ acts on the polynomial $g$ by interchanging the variables $x_i$ and $x_{i+1}$.

We define the \emph{padded Schubert polynomials} $\tilde{\mf{S}}_w(x_1,...,x_{n-1}; y_1,...,y_{n-1})$, a basis for $\tilde{V}=\spn_{\C} \{x^{\alpha}y^{\rho-\alpha} \: | \: \alpha \leq \rho \}$, defined as the images of the $\mf{S}_w$ under the natural isomorphism $V \to \tilde{V}$ given by 
\[
x^{\alpha} \mapsto x^{\alpha}y^{\rho - \alpha}.
\]
Define operators $\nabla$ and $\Delta$ on $\tilde{V}$ by:
\begin{align*}
    \nabla &= \sum_{i=1}^{n-1} y_i \frac{\partial}{\partial x_i} \\
    \Delta &= \sum_{i=1}^{n-1} x_i \frac{\partial}{\partial y_i}.
\end{align*}

The following theorem was proved in the context of Schubert polynomials by Hamaker, Pechenik, Speyer, and Weigandt \cite{Hamaker2018}; it's extension to padded Schubert polynomials is immediate.

\begin{theorem} \label{thm:nabla}
\begin{equation} \label{eq:nabla}
\nabla \cdot \tilde{\mf{S}}_w = \sum_{ws_i \lessdot_W w} \tilde{\mf{S}}_{ws_i}.
\end{equation}
\end{theorem}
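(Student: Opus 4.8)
The plan is to deduce the padded identity from the unpadded derivative identity of Hamaker, Pechenik, Speyer, and Weigandt by transporting it through the padding isomorphism. Write $\iota\colon V\to\tilde V$ for the isomorphism $x^\alpha\mapsto x^\alpha y^{\rho-\alpha}$ defining the $\tilde{\mf S}_w$, and let $D=\sum_{i=1}^{n-1}\partial/\partial x_i$ be the ordinary total derivative on $V$. First I would record the intertwining relation $\nabla\circ\iota=\iota\circ D$. This is a one-line check on the monomial basis: writing $e_i$ for the $i$-th standard basis vector, we have $\nabla(x^\alpha y^{\rho-\alpha})=\sum_i \alpha_i\,x^{\alpha-e_i}y^{\rho-\alpha+e_i}$, and since $\rho-\alpha+e_i=\rho-(\alpha-e_i)$ this equals $\iota\bigl(\sum_i\alpha_i x^{\alpha-e_i}\bigr)=\iota(D\,x^\alpha)$; by linearity the two operators agree on all of $V$. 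One also checks in passing that $\nabla$ preserves $\tilde V$, since $\alpha\le\rho$ forces $\alpha-e_i\le\rho$ for every surviving term.

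Granting this, the theorem reduces to the unpadded statement
\begin{equation*}
D\,\mf S_w=\sum_{ws_i\lessdot_W w}\mf S_{ws_i},
\end{equation*}
since then $\nabla\tilde{\mf S}_w=\nabla\iota(\mf S_w)=\iota(D\,\mf S_w)=\sum_{ws_i\lessdot_W w}\iota(\mf S_{ws_i})=\sum_{ws_i\lessdot_W w}\tilde{\mf S}_{ws_i}$, which is exactly \eqref{eq:nabla}. This last identity is precisely the theorem of Hamaker, Pechenik, Speyer, and Weigandt \cite{Hamaker2018}, so in principle nothing further is needed: the entire combinatorial content of Theorem~\ref{thm:nabla} already lives in the unpadded setting, and the passage to $\tilde V$ is the formal step just described.

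For completeness I would then indicate how the unpadded identity is proved. The natural strategy is a downward induction on the codimension $N-\ell(w)$ built on the divided-difference recursion for $\mf S_w$: the base case $w=w_0$ asks that $\sum_i\partial/\partial x_i$ applied to $x^\rho$ equal $\sum_i\mf S_{w_0 s_i}$, already a nontrivial polynomial identity since each $\mf S_{w_0 s_i}$ is itself a full Schubert polynomial (obtained from $\mf S_{w_0}$ by a divided difference) rather than a single monomial. The inductive step requires understanding how the ordinary derivative $D$ interacts with the Newton operators $N_j$, i.e.\ a commutator relation that lets the identity propagate from $w$ to the permutations it covers. The main obstacle is exactly this interaction: unlike the divided differences, $D$ is not compatible with the divided-difference calculus in a termwise way, so controlling $[D,N_j]$---or, as in \cite{Hamaker2018}, finding the correct expansion of each $\partial\mf S_w/\partial x_i$ into Schubert polynomials and summing---is where the real work lies. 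Since this is carried out in \cite{Hamaker2018}, I would cite it for the unpadded identity and present only the intertwining argument above in full.
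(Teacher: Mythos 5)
Your proof is correct and follows essentially the same route as the paper, which simply cites Hamaker--Pechenik--Speyer--Weigandt \cite{Hamaker2018} for the unpadded identity and remarks that the extension to padded Schubert polynomials is immediate; your intertwining relation $\nabla\circ\iota=\iota\circ D$ is precisely that immediate extension, spelled out. One immaterial slip in your closing sketch of the cited result: each $\mf{S}_{w_0 s_i}$ is in fact a single monomial (applying a divided difference to $x^\rho$ yields a monomial), so the base case $w=w_0$ is a direct computation rather than a nontrivial polynomial identity---but since you defer to \cite{Hamaker2018} for the unpadded statement anyway, this does not affect your argument.
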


Theorem \ref{thm:nabla} implies in particular that, identifying $\C W_n$ and $\tilde{V}$ by the map $w \mapsto \tilde{\mf{S}}_w$, $\nabla$ is an order lowering operator for $W_n$.  Dually, Theorem \ref{thm:delta} shows that $\Delta$ is an order raising operator for $S_n$.  The weights are defined in terms of the \emph{(Lehmer) code} of $w$: $\code(w):=(c_1,...,c_{n-1})$ where $c_i=\# \{j>i \: | \: w_j < w_i\}$.

\begin{theorem}
\label{thm:delta}
\begin{equation} \label{eq:Delta}
\Delta \cdot \tilde{\mf{S}}_w = \sum_{w \lessdot_S wu} c(w,wu) \tilde{\mf{S}}_{wu}
\end{equation}
where $c(w,wu)$ is the Manhattan distance between $\code(w)$ and $\code(wu)$.
\end{theorem}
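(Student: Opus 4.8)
The plan is to keep working in the padded space $\tilde V$, where the symmetry $x_i\leftrightarrow y_i$ interchanging $\nabla$ and $\Delta$ is manifest, and to reduce the statement to a purely combinatorial identity about the Lehmer code together with a computation of the matrix of $\Delta$ in the Schubert basis. Writing $\tilde{\mf{S}}_w=\sum_{\alpha\leq\rho}a_{w,\alpha}\,x^{\alpha}y^{\rho-\alpha}$, the operator acts on the monomial basis by
\[
\Delta\left(x^{\alpha}y^{\rho-\alpha}\right)=\sum_{i=1}^{n-1}(\rho_i-\alpha_i)\,x^{\alpha+e_i}y^{\rho-\alpha-e_i},
\]
so $\Delta$ is manifestly a raising operator for the rank function, and everything reduces to showing that, after re-expanding in the Schubert basis, (i) the coefficient of $\tilde{\mf{S}}_v$ vanishes unless $w\lessdot_S v$, and (ii) on a strong cover $v=wu$ the coefficient equals $c(w,wu)$. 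Under the isomorphism $\tilde V\cong V$ the operator $\Delta$ pulls back to $\sum_i x_i\bigl(\rho_i-x_i\tfrac{\partial}{\partial x_i}\bigr)$, a combination of multiplication and Euler-type degree operators; this is the form I would feed into the multiplication rules for Schubert polynomials.

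Before doing that computation I would dispatch the combinatorial content of (ii) as a separate lemma: for a strong cover $w\lessdot_S wt_{ij}$ (so $i<j$, $w_i<w_j$, and no $k$ with $i<k<j$ and $w_i<w_k<w_j$) the Manhattan distance between $\code(w)$ and $\code(wt_{ij})$ equals $1+2\,\#\{k>j:\ w_i<w_k<w_j\}$. The proof is a direct inspection of how the code changes when the values in positions $i$ and $j$ are swapped: every coordinate other than $i$ and $j$ is unchanged, using the cover condition for the coordinates strictly between $i$ and $j$; coordinate $i$ increases by $1+\#\{k>i,\ k\neq j:\ w_i<w_k<w_j\}$, and coordinate $j$ decreases by $\#\{k>j:\ w_i<w_k<w_j\}$; the cover condition forces these two counts to be equal, giving the stated value. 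This reformulation is what makes the target coefficient recognizable on the algebraic side.

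For the main computation I would expand $x_i\mf{S}_w$ using Monk's rule / the transition (Lascoux--Schützenberger) formula, which expresses $x_i\mf{S}_w$ as a signed sum of Schubert polynomials $\mf{S}_{wt_{ab}}$ indexed by strong covers in which position $i$ is one of the two swapped positions (see \cite{Macdonald}); summing over $i$ against the weights $\rho_i$ together with the degree operators, and collecting, should produce exactly the strong covers of $w$ with the coefficient $1+2\,\#\{k>j:\ w_i<w_k<w_j\}$ furnished by the lemma. The conceptual reason to expect this is the adjunction $\Delta=\nabla^{*}$ with respect to the apolar form on $\tilde V$ (under which $x_i$ is dual to $\partial/\partial x_i$ and $y_i$ is dual to $\partial/\partial y_i$), combined with Theorem \ref{thm:nabla}; I do not, however, expect this adjunction to yield the formula directly, since the padded Schubert polynomials are not orthogonal for this form and so the Gram matrix intervenes.

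The main obstacle is precisely the interaction between the degree operators and the Schubert expansion: multiplication by $x_i$ is controlled by the transition formula, but the factor $\rho_i-x_i\,\partial/\partial x_i$ is not diagonal on the Schubert basis, and the signed contributions coming from different $i$ must be shown to cancel all non-cover terms and to combine, on each genuine strong cover, into the single integer predicted by the code lemma. Equivalently, in the symmetric formulation, the difficulty is that the involution $\iota\colon x_i\leftrightarrow y_i$ carries $\tilde{\mf{S}}_w$ to a nontrivial combination of padded Schubert polynomials of complementary rank (a ``complementation'' of the Schubert expansion), so transporting Theorem \ref{thm:nabla} across $\iota$ requires understanding this complementation. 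I therefore expect the bulk of the work to be the bookkeeping that verifies these cancellations and matches the surviving coefficient to $c(w,wt_{ij})$.
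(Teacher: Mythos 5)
Your reduction is set up correctly: under $x^{\alpha}\mapsto x^{\alpha}y^{\rho-\alpha}$ the operator $\Delta$ does pull back to $\sum_i x_i\bigl(\rho_i - x_i\tfrac{\partial}{\partial x_i}\bigr)$ on $V$, and your code lemma is right --- for a strong cover $w\lessdot_S wt_{ij}$ one has $c(w,wt_{ij})=1+2\,\#\{k>j:\ w_i<w_k<w_j\}$, by exactly the coordinate-by-coordinate inspection you describe (this also explains the remark after the theorem that $c(w,wt_{ij})$ is odd). But the heart of the argument --- that re-expanding $\sum_i x_i\bigl(\rho_i-x_i\tfrac{\partial}{\partial x_i}\bigr)\mf{S}_w$ in the Schubert basis kills all non-cover terms and produces precisely these coefficients on covers --- is nowhere established; you explicitly defer it to ``bookkeeping,'' and there is no reason offered that the bookkeeping closes. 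Monk's rule controls only multiplication by $x_i$ (and the signed expansion $x_i\mf{S}_w=\sum_{b>i}\mf{S}_{wt_{ib}}-\sum_{a<i}\mf{S}_{wt_{ai}}$ involves permutations outside $S_n$, so truncation to $V$ must itself be justified term by term), while the operator $x_i\tfrac{\partial}{\partial x_i}$ --- equivalently the decomposition of $\mf{S}_w$ by $x_i$-degree --- has no simple Schubert expansion at all. Asserting that the signed contributions ``should'' collapse to the cover terms with coefficient $c(w,wt_{ij})$ is asserting the theorem, not proving it; this is a genuine gap, not a routine verification.

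The paper sidesteps this computation entirely with an $\mathfrak{sl}_2$ uniqueness argument, which you brush past when you dismiss the adjunction idea. Namely, $(\Delta,\nabla,h)$ makes $\tilde V$ an $\mathfrak{sl}_2(\C)$-representation (a tensor product of irreducibles $V_{n-1}\otimes\cdots\otimes V_0$), and it was already shown in \cite{Gaetz2018} that the purely combinatorial operators given by the right-hand sides of the two identities, together with $h(w)=(2\ell(w)-N)w$, form an $\mathfrak{sl}_2(\C)$-representation on $\C W_n$. Theorem \ref{thm:nabla} says the identification $w\mapsto\tilde{\mf{S}}_w$ intertwines the $f$-actions, and it visibly intertwines the diagonal $h$-actions; since in any $\mathfrak{sl}_2(\C)$-representation the actions of $f$ and $h$ determine that of $e$ uniquely (Jacobson--Morozov, or Proctor \cite{Proctor1982}), the $e$-actions must also agree, which is exactly (\ref{eq:Delta}). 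If you want to salvage your direct approach, you would need to actually prove the cancellation statement --- essentially redoing the work of \cite{Gaetz2018} and \cite{Hamaker2018} by hand in the Schubert basis; otherwise, replacing your final two paragraphs with the uniqueness argument (for which your code lemma is then unnecessary, though it is a nice consistency check) is the efficient way to close the gap.
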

\begin{proof}
Let $e,f,h$ denote the standard generators of the Lie algebra $\mf{sl}_2(\C)$.  It is clear from the classification of irreducible representations for $\mf{sl}_2(\C)$ (see, for example, \cite{Kirillov2008}) that 
\[
\tilde{V} \cong V_{n-1} \otimes V_{n-2} \otimes \cdots \otimes V_1 \otimes V_0
\]
where $V_i \cong \spn_{\C} \{x_{n-i}^{j}y_{n-i}^{i-j} \: | \: 0 \leq j \leq i\}$ is the $(i+1)$-dimensional irreducible representation of $\mf{sl}_2(\C)$, with the actions of $e$ and $f$ given by $\Delta$ and $\nabla$ respectively.  Here $h$ acts by multiplying monomials $x^{\alpha}y^{\beta} \in \tilde{V}$ by the scalar $|\alpha|-|\beta|$.

Identifying $\tilde{V}$ with $\C W_n$ by $\tilde{\mf{S}}_w \mapsto w$, it was shown in \cite{Gaetz2018} that the operators defined by the right-hand-sides of (\ref{eq:nabla}) and (\ref{eq:Delta}), together with the action of $h$ by $h(w)=\left(2 \ell(w) - N\right) w$, determine a representation of $\mf{sl}_2(\C)$.  As an easy corollary of the Jacobson-Morozov Theorem (or as explicitly shown by Proctor \cite{Proctor1982}) the actions of $e$ and $h$ in an $\mf{sl}_2(\C)$-representation uniquely determine the action of $f$.  Therefore the action of $\Delta$ on $\tilde{V}$ must be given as above.
\end{proof}

It is elementary to see that for $w \lessdot_S wt_{ij}$ a covering relation, $\code(w)$ and $\code(wt_{ij})$ differ only in positions $i$ and $j$, and that $c(w,wt_{ij})$ is an odd positive number.

\section{Path counting identities}
\label{sec:path-counting}

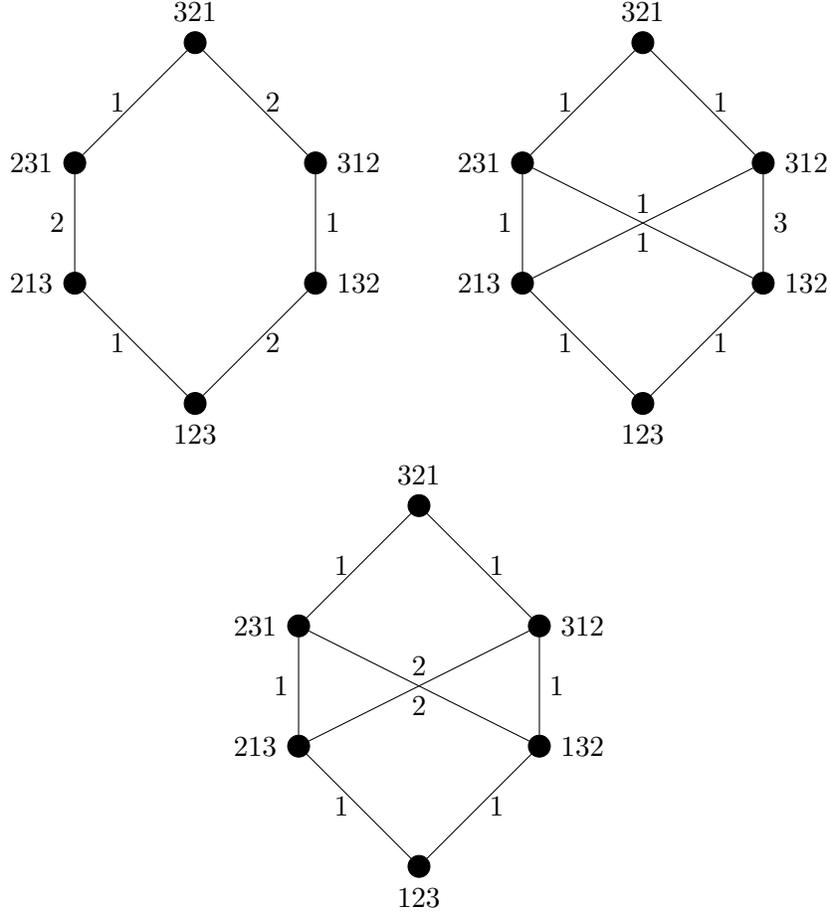
\begin{figure} 
\begin{tikzpicture} [scale=1.6]
\node[draw,shape=circle,fill=black,scale=0.8] (a0)[label=below:{$123$}] at (0,0) {};
\node[draw,shape=circle,fill=black,scale=0.8](b0)[label=left:{$213$}] at (-1,1) {};
\node[draw,shape=circle,fill=black,scale=0.8](b1)[label=right:{$132$}] at (1,1) {};
\node[draw,shape=circle,fill=black,scale=0.8](c0)[label=left:{$231$}] at (-1,2) {};
\node[draw,shape=circle,fill=black,scale=0.8](c1)[label=right:{$312$}] at (1,2) {};
\node[draw,shape=circle,fill=black,scale=0.8](d0)[label={$321$}] at (0,3) {};

\draw (a0) -- node[left] {$1$} (b0);
\draw (a0) -- node[right] {$2$} (b1);
\draw (b0) -- node[left] {$2$} (c0);
\draw (b1) -- node[right] {$1$} (c1);
\draw (c0) -- node[left] {$1$} (d0);
\draw (c1) -- node[right] {$2$} (d0);
\end{tikzpicture} \hspace{0.2in}
\begin{tikzpicture} [scale=1.6]
\node[draw,shape=circle,fill=black,scale=0.8] (a0)[label=below:{$123$}] at (0,0) {};
\node[draw,shape=circle,fill=black,scale=0.8](b0)[label=left:{$213$}] at (-1,1) {};
\node[draw,shape=circle,fill=black,scale=0.8](b1)[label=right:{$132$}] at (1,1) {};
\node[draw,shape=circle,fill=black,scale=0.8](c0)[label=left:{$231$}] at (-1,2) {};
\node[draw,shape=circle,fill=black,scale=0.8](c1)[label=right:{$312$}] at (1,2) {};
\node[draw,shape=circle,fill=black,scale=0.8](d0)[label={$321$}] at (0,3) {};

\draw (a0) -- node[left] {$1$} (b0);
\draw (a0) -- node[right] {$1$} (b1);
\draw (b0) -- node[left] {$1$} (c0);
\draw (b1) -- node[right] {$3$} (c1);
\draw (b0) -- node[below] {$1$} (c1);
\draw (b1) -- node[above] {$1$} (c0);
\draw (c0) -- node[left] {$1$} (d0);
\draw (c1) -- node[right] {$1$} (d0);
\end{tikzpicture} \hspace{0.2in}
\begin{tikzpicture} [scale=1.6]
\node[draw,shape=circle,fill=black,scale=0.8] (a0)[label=below:{$123$}] at (0,0) {};
\node[draw,shape=circle,fill=black,scale=0.8](b0)[label=left:{$213$}] at (-1,1) {};
\node[draw,shape=circle,fill=black,scale=0.8](b1)[label=right:{$132$}] at (1,1) {};
\node[draw,shape=circle,fill=black,scale=0.8](c0)[label=left:{$231$}] at (-1,2) {};
\node[draw,shape=circle,fill=black,scale=0.8](c1)[label=right:{$312$}] at (1,2) {};
\node[draw,shape=circle,fill=black,scale=0.8](d0)[label={$321$}] at (0,3) {};

\draw (a0) -- node[left] {$1$} (b0);
\draw (a0) -- node[right] {$1$} (b1);
\draw (b0) -- node[left] {$1$} (c0);
\draw (b1) -- node[right] {$1$} (c1);
\draw (b0) -- node[below] {$2$} (c1);
\draw (b1) -- node[above] {$2$} (c0);
\draw (c0) -- node[left] {$1$} (d0);
\draw (c1) -- node[right] {$1$} (d0);
\end{tikzpicture}
\caption{The weak order weights (top left), code weights (top right), and Chevalley weights (bottom) for the symmetric group $S_3$.  By Proposition \ref{prop:chev-path-counting} and Theorem \ref{thm:path-counting}, in all cases there are $6={3 \choose 2}!$ weighted paths from 123 to 321.}
\label{fig:weights}
\end{figure}

We call the strong order edge weights $\wt_{\Delta}(w,wt_{ij})$ given by (\ref{eq:Delta}) the \emph{code weights}.  These are different from the previously studied \emph{Chevalley weights}: 
\[
\wt_{Chev}(w,wt_{ij}):=j-i.
\]
The weak order weights $\wt_{\nabla}$, and the two strong order weights are shown in Figure \ref{fig:weights}. 

We now observe a symmetry possessed by all weight functions under consideration.  This symmetry corresponds to the symmetry of the weighted posets in Figure \ref{fig:weights} given by reflecting about a horizontal line.

\begin{prop} \label{prop:w0-symmetry}
Let $\wt_{\ph}$ be one of $\wt_{\nabla}, \wt_{\Delta},$ or $\wt_{Chev}$, and let $\leq$ denote the corresponding order (weak order for $\ph=\nabla$ and strong order for the others).  Suppose $u \lessdot w$ is a covering relation, then $w_0w \lessdot w_0u$ and $\wt_{\ph}(u,w)=\wt_{\ph}(w_0w, w_0u)$.  Therefore for $v \leq w$ we have $m_{\ph}(v,w)=m_{\ph}(w_0w, w_0v)$.
\end{prop}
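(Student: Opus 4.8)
The plan is to reduce the statement to three ingredients: (a) that left multiplication by $w_0$ reverses each covering relation, (b) that it preserves the relevant edge weight, and (c) that (a) and (b) combine to give the path-count identity via a weight-preserving bijection on saturated chains. Throughout I will lean on the standard length identity $\ell(w_0 w) = N - \ell(w)$, which holds because $(w_0 w)(k) = n+1 - w(k)$ interchanges the inversions and non-inversions of $w$.

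For (a), suppose $u \lessdot w$ is a covering relation, so $w = u\tau$ with $\tau = s_i$ (weak order) or $\tau = t_{ij}$ (strong order) and $\ell(w) = \ell(u)+1$. Since $\tau$ is an involution we have $u = w\tau$, hence $w_0 u = (w_0 w)\tau$, and moreover $\ell(w_0 u) = N - \ell(u) = \ell(w_0 w) + 1$. Thus $w_0 w$ and $w_0 u$ differ by right multiplication by the same transposition $\tau$ and their lengths differ by one, so $w_0 w \lessdot w_0 u$ in the appropriate order. The essential by-product is that the \emph{label} $\tau = u^{-1}w$ of a cover is unchanged under $w \mapsto w_0 w$, since $(w_0 w)^{-1}(w_0 u) = w^{-1}u = \tau^{-1} = \tau$.

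For (b) I treat the three weights separately. For $\ph = \nabla$ and $\ph = Chev$ the weight of a cover is a function of its label alone: by Theorem \ref{thm:nabla}, $\wt_\nabla(u,w)$ is determined by $i$ where $\tau = s_i$ (independent of $w$), and $\wt_{Chev}(u,w) = j-i$ is determined by $\tau = t_{ij}$. Hence the label-invariance from (a) immediately yields $\wt_\ph(u,w) = \wt_\ph(w_0 w, w_0 u)$. The remaining case, $\ph = \Delta$, is the main obstacle, because its weight $c(u,w) = \|\code(w) - \code(u)\|_1$ is genuinely \emph{not} a function of $\tau$ alone (e.g. two $t_{12}$-covers can carry different code weights). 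Here I would first prove the key identity
\[
\code(w_0 w) = \rho - \code(w),
\]
which follows at once from $\code(w_0 w)_i = \#\{j > i : w(j) > w(i)\} = (n-i) - \code(w)_i = \rho_i - \code(w)_i$. Because $\code(\cdot)\mapsto \rho - \code(\cdot)$ is an affine reflection it preserves the $\ell^1$-metric, so $\|\code(w_0 u) - \code(w_0 w)\|_1 = \|\code(w) - \code(u)\|_1$, giving $\wt_\Delta(u,w) = \wt_\Delta(w_0 w, w_0 u)$.

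Finally, for (c), I would lift the cover-level symmetry to chains. Given $v \leq w$, the involution $x \mapsto w_0 x$ carries a saturated chain $v = x_0 \lessdot x_1 \lessdot \cdots \lessdot x_k = w$ to the saturated chain $w_0 w = w_0 x_k \lessdot \cdots \lessdot w_0 x_0 = w_0 v$; by (a) this is a bijection between chains $v \to w$ and chains $w_0 w \to w_0 v$, and by (b) it preserves the weight of each edge, hence of each chain. Summing over all chains gives $m_\ph(v,w) = m_\ph(w_0 w, w_0 v)$. The only computational content is the code identity in (b); the rest is bookkeeping with $\ell(w_0 w) = N - \ell(w)$ and the involution $w_0$.
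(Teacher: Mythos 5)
Your proof is correct and follows essentially the same route as the paper's: reverse each cover via $w \mapsto w_0 w$ (the paper does this through the one-line formula for $w_0w$, you through $\ell(w_0 w)=N-\ell(w)$ and the preserved label $\tau=u^{-1}w$), note that the $\nabla$- and Chevalley weights depend only on that label, treat the code weights as the one nontrivial case, and finish with the same weight-preserving bijection on saturated chains. The only divergence is in the $\Delta$ case, where the paper simply cites the alternate description of $c(w,wt_{ij})$ from \cite{Gaetz2018}, while you verify the invariance directly from the identity $\code(w_0 w)=\rho-\code(w)$ and the fact that $c\mapsto\rho-c$ is an $\ell^1$-isometry --- a correct, self-contained replacement for that citation.
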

\begin{proof}
It is straightforward to see that, for $w=w_1,...,w_n$ in one-line notation, we have $w_0w=(n+1)-w_1,...,(n+1)-w_n$; from this it is clear that $w_0w \lessdot w_0u$.  The second claim for $\wt_{\nabla}$ and $\wt_{Chev}$ follows because we swap the same positions to get from $w_0w$ to $w_0u$ as we do to get from $u$ to $w$.  For $\wt_{\Delta}$ this follows immediately from the alternate description for $c(w,wt_{ij})$ given in \cite{Gaetz2018}.  Finally, every chain $v \to w$ corresponds to a chain $w_0w \to w_0v$ with the same edge labels by multiplying all elements by $w_0$, thus the total weighted path counts are the same.
\end{proof}

The following fact is due essentially to Chevalley, and has been further studied by Stembridge \cite{Stembridge2002} and Postnikov and Stanley \cite{Postnikov2009}.

\begin{prop} \label{prop:chev-path-counting}
For the Chevalley weights on the strong order and the $\nabla$-weights on the weak order, we have:
\[
m_{Chev}(\e, w_0)=m_{\nabla}(\e, w_0)=N!.
\]
\end{prop}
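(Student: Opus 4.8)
The two equalities live on different posets, and the plan is to treat them by separate arguments, exploiting in each case the extra structure that is available. The weak-order count will be handled purely on the polynomial side via the differential operator $\nabla$, while the strong-order count is the classical statement of Chevalley and will be obtained from the cohomology of the flag variety.

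For the weak-order count I would work entirely with padded Schubert polynomials. By the discussion following the definition of $m_\ph$, the matrix entry $m_\nabla(\e,w_0)$ is the coefficient of $\tilde{\mf S}_\e$ in $\nabla^N\tilde{\mf S}_{w_0}$. Under the padding isomorphism one has $\tilde{\mf S}_{w_0}=x^\rho$ and $\tilde{\mf S}_\e=y^\rho$, so it suffices to compute $\nabla^N x^\rho$ directly from $\nabla=\sum_i y_i\partial_{x_i}$. The summands $y_i\partial_{x_i}$ pairwise commute and act on disjoint variables, so expanding $\nabla^N$ by the multinomial theorem, the only term that can reach $y^\rho$ is the one applying $y_i\partial_{x_i}$ exactly $\rho_i$ times for each $i$; this term contributes $\binom{N}{\rho_1,\dots,\rho_{n-1}}\prod_i\rho_i!=\frac{N!}{\prod_i\rho_i!}\prod_i\rho_i!=N!$. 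Hence $\nabla^N x^\rho=N!\,y^\rho$ and $m_\nabla(\e,w_0)=N!$. Equivalently, in the decomposition $\tilde V\cong V_{n-1}\otimes\cdots\otimes V_0$ the vector $x^\rho$ is highest weight and $y^\rho$ is lowest weight, and $f=\nabla$ lowers the $i$-th tensor factor from top to bottom with normalizing constant $\rho_i!$, the multinomial coefficient accounting for the interleaving of the $N$ lowering steps.

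For the strong-order count I would pass to the cohomology of the flag variety $\mathrm{Fl}_n=GL_n(\C)/B$. The key observation is that the staircase $\rho=(n-1,\dots,1)$ is precisely the Weyl vector, so that the Chevalley weight is $j-i=\langle\rho,\alpha_{ij}^\vee\rangle$ for the positive root $\alpha_{ij}=e_i-e_j$. Chevalley's formula then states that multiplication by the divisor class $c_1(\mc L_\rho)$ acts on Schubert classes by $c_1(\mc L_\rho)\cdot\sigma_w=\sum_{w\lessdot_S wt_{ij}}\langle\rho,\alpha_{ij}^\vee\rangle\,\sigma_{wt_{ij}}$. Iterating this $N$ times starting from $\sigma_\e=1$ and extracting the coefficient of $\sigma_{w_0}=[\mathrm{pt}]$ identifies $m_{Chev}(\e,w_0)$ with the degree $\int_{\mathrm{Fl}_n}c_1(\mc L_\rho)^N$. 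By the Weyl dimension formula this degree is the $\lambda=\rho$ specialization of $N!\prod_{\alpha>0}\frac{\langle\lambda,\alpha^\vee\rangle}{\langle\rho,\alpha^\vee\rangle}$, namely $N!\prod_{\alpha>0}\frac{\langle\rho,\alpha^\vee\rangle}{\langle\rho,\alpha^\vee\rangle}=N!$, the product telescoping to $1$ precisely because the chosen weight is $\rho$.

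The weak-order half is immediate once the differential-operator form of $\nabla$ is in hand, so I expect the only real work to be on the Chevalley side. There the two points to get right are the identification of the Chevalley weight with $\langle\rho,\alpha_{ij}^\vee\rangle$ and the verification that the resulting degree collapses to exactly $N!$ rather than to a nontrivial product of root data. I would note that geometry can be avoided entirely: one may instead cite the weighted maximal-chain enumeration of Stembridge \cite{Stembridge2002} and Postnikov and Stanley \cite{Postnikov2009}, or observe that $N!=N!\cdot\mf S_{w_0}(1,\dots,1)$ is Macdonald's reduced word formula \cite{Macdonald}, which is the weak-order shadow of the very same count.
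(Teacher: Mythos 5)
Your proposal is correct, and the comparison here is somewhat lopsided: the paper offers no proof of Proposition \ref{prop:chev-path-counting} at all, presenting it as a known fact ``due essentially to Chevalley'' with pointers to Stembridge \cite{Stembridge2002} and Postnikov--Stanley \cite{Postnikov2009}, so any honest argument you give is necessarily a different route. Your weak-order half is essentially the computation the paper does deploy one page later, in the proof of Theorem \ref{thm:path-counting}: there the authors note $\nabla^{|\alpha|}\cdot x^{\alpha}y^{\rho-\alpha}=|\alpha|!\,y^{\rho}$, which for $\alpha=\rho$ is exactly your identity $\nabla^{N}x^{\rho}=N!\,y^{\rho}$, and since the rank-$0$ space is spanned by $y^{\rho}=\tilde{\mf{S}}_{\e}$ this immediately gives $m_{\nabla}(\e,w_0)=N!$ (note your argument only uses that $\wt_{\nabla}$ is \emph{defined} by the matrix entries of $\nabla$ in the padded Schubert basis, so it is insensitive to the exact values of the edge weights --- a good feature). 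Your strong-order half is a genuine addition: the identification $\wt_{Chev}(w,wt_{ij})=j-i=\langle\rho,\alpha_{ij}^{\vee}\rangle$, the iteration of Chevalley's formula to identify $m_{Chev}(\e,w_0)$ with $\int_{\mathrm{Fl}_n}c_1(\mc{L}_{\rho})^{N}$, and the collapse of the degree formula $N!\prod_{\alpha>0}\langle\lambda,\alpha^{\vee}\rangle/\langle\rho,\alpha^{\vee}\rangle$ at $\lambda=\rho$ are all correct, and this is precisely the content behind the citations the paper makes (the degree formula appears in \cite{Postnikov2009}). What your approach buys is self-containedness; what the paper's buys is economy, since the proposition is only background for the new result, Theorem \ref{thm:path-counting}, which reproves $m_{\Delta}(\e,w_0)=N!$ for the code weights by the same padded-polynomial method you use on the $\nabla$ side. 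Your closing observation that $m_{\nabla}(\e,w_0)=N!$ is also the $u=w_0$ case of Macdonald's formula (Theorem \ref{thm:macdonald-reduced-word}), since $\mf{S}_{w_0}(1,\dots,1)=1$, matches the paper's implicit logic exactly.
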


Macdonald's celebrated reduced word identity expresses $\mf{S}_u(1,...,1)$ in terms of weighted reduced words in the weak order.

\begin{theorem}[Macdonald \cite{Macdonald}] \label{thm:macdonald-reduced-word}
For any $u \in S_n$:
\[
m_{\nabla}(\e, u) = \ell(u)! \mf{S}_u(1,...,1).
\]
\end{theorem}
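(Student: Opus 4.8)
The plan is to compute the operator power $\nabla^{\ell(u)}$ applied to $\tilde{\mf{S}}_u$ in two different ways: combinatorially, through the path-counting dictionary of Section \ref{sec:order-operators}, and algebraically, through the explicit differential form $\nabla=\sum_i y_i\,\partial/\partial x_i$. Matching the two answers yields the identity.

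First I would record the combinatorial side. By Theorem \ref{thm:nabla}, under the identification $\tilde{\mf{S}}_w\leftrightarrow w$ the operator $\nabla$ is order-lowering for the weak order $W_n$, so its action on $\tilde{V}$ respects the grading by rank (it lowers $x$-degree, hence rank, by exactly one). Consequently the composite $\nabla^{[0,\ell(u)]}$ coincides with the $\ell(u)$-th power of the single operator $\nabla$, and the matrix-entry description of $\nabla^{[0,\ell(u)]}$ shows that the coefficient of $\tilde{\mf{S}}_\e$ in $\nabla^{\ell(u)}\tilde{\mf{S}}_u$ is the weighted path count $m_{\nabla}(\e,u)$. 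Since $(W_n)_0=\{\e\}$ and $\tilde{\mf{S}}_\e=y^\rho$, this gives
\[
\nabla^{\ell(u)}\cdot\tilde{\mf{S}}_u = m_{\nabla}(\e,u)\,\tilde{\mf{S}}_\e = m_{\nabla}(\e,u)\,y^{\rho}.
\]

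Next I would compute the left-hand side directly. The operators $y_i\,\partial/\partial x_i$ commute, so $\nabla^{\ell(u)}$ expands multinomially. Because $\mf{S}_u$ is homogeneous of degree $\ell(u)$, every monomial occurring in $\tilde{\mf{S}}_u$ has the form $x^{\alpha}y^{\rho-\alpha}$ with $|\alpha|=\ell(u)$. The key computation is that $\nabla^{\ell(u)}(x^{\alpha}y^{\rho-\alpha})=\ell(u)!\,y^{\rho}$ for each such monomial: in the multinomial expansion only the term applying $y_i\,\partial/\partial x_i$ exactly $\alpha_i$ times in each variable survives (any other distribution either differentiates some $x_i$ past zero or leaves some $x_i$ undifferentiated), and that term contributes $\binom{\ell(u)}{\alpha_1,\ldots,\alpha_{n-1}}\prod_i \alpha_i! = \ell(u)!$ times $y^{\rho}$. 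Summing over the monomials of $\tilde{\mf{S}}_u$, and recognizing that the total coefficient sum of $\mf{S}_u$ is exactly $\mf{S}_u(1,\ldots,1)$, I obtain
\[
\nabla^{\ell(u)}\cdot\tilde{\mf{S}}_u = \ell(u)!\,\mf{S}_u(1,\ldots,1)\,y^{\rho}.
\]
Comparing the two expressions for $\nabla^{\ell(u)}\tilde{\mf{S}}_u$ and cancelling the nonzero monomial $y^{\rho}$ yields $m_{\nabla}(\e,u)=\ell(u)!\,\mf{S}_u(1,\ldots,1)$.

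The only substantive step is the monomial identity $\nabla^{\ell(u)}(x^{\alpha}y^{\rho-\alpha})=\ell(u)!\,y^{\rho}$, which I expect to be the main (though still routine) obstacle; once it is established, the proof is a matching of coefficients. The conceptual point worth emphasizing is that the factorial in Macdonald's formula arises precisely as the multinomial count of the orders in which the commuting operators $y_i\,\partial/\partial x_i$ can strip away the $x$-variables, while $\mf{S}_u(1,\ldots,1)$ appears as the surviving total coefficient when every monomial of $\tilde{\mf{S}}_u$ collapses onto the single lowest-rank monomial $y^{\rho}=\tilde{\mf{S}}_\e$.
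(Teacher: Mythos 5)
Your proof is correct, and it is essentially the argument the paper itself uses: while Theorem \ref{thm:macdonald-reduced-word} is only cited there, the proof of Theorem \ref{thm:path-counting} re-derives it in exactly your way, via the observation (attributed to \cite{Hamaker2018}) that $\nabla^{|\alpha|}\cdot x^{\alpha}y^{\rho-\alpha}=|\alpha|!\,y^{\rho}$ applied across the padded Schubert basis, matched against the weighted path count supplied by Theorem \ref{thm:nabla}. Your multinomial justification of the monomial identity is a fine filling-in of the one step the paper leaves implicit.
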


Theorem \ref{thm:path-counting} below provides a strong order analog of Theorem \ref{thm:macdonald-reduced-word}.  It also shows that strong order path counting identities with the code weights satisfy the same formula as the Chevalley weights do in Proposition \ref{prop:chev-path-counting}, but also hold for intervals other than $[\e, w_0]$.  See \cite{chain-paper} for a common generalization of the code weights and the Chevalley weights. 

\begin{theorem} \label{thm:path-counting}
For any $u \in S_n$:
\begin{align*}
\frac{m_{\Delta}(u, w_0)}{(N-\ell(u))!} &= \frac{m_{\nabla}(\e,u)}{\ell(u)!} &= \mf{S}_u(1,...,1) &=
\frac{m_{\Delta}(\e,w_0 u)}{(N-\ell(u))!} &= \frac{m_{\nabla}(w_0 u, w_0)}{\ell(u)!}.
\end{align*}
In particular, 
\[
m_{\Delta}(\e, w_0)=N!.
\]
\end{theorem}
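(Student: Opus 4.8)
The plan is to establish the four-term equality by relating the three middle quantities to the combinatorial actions of $\nabla$ and $\Delta$, and then to exploit the $w_0$-symmetry already recorded in Proposition \ref{prop:w0-symmetry} to get the outer two equalities essentially for free. The central identity $m_{\nabla}(\e,u)=\ell(u)!\,\mf{S}_u(1,\dots,1)$ is exactly Macdonald's reduced word formula (Theorem \ref{thm:macdonald-reduced-word}), so one of the middle equalities is immediate. The genuinely new content is the equality $m_{\Delta}(u,w_0)/(N-\ell(u))! = \mf{S}_u(1,\dots,1)$, and this is where I would focus the argument.

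First I would extract a closed form for $m_{\Delta}(u,w_0)$ directly from Theorem \ref{thm:delta}. By the correspondence between order operators and weighted path counts recorded in Section \ref{sec:order-operators}, $m_{\Delta}(u,w_0)$ is the coefficient of $\tilde{\mf{S}}_{w_0}$ in $\Delta^{N-\ell(u)}\cdot\tilde{\mf{S}}_u$. Passing through the isomorphism $\tilde V\to V$, $\tilde{\mf{S}}_u\mapsto \mf{S}_u$, the operator $\Delta=\sum_i x_i\,\partial/\partial y_i$ becomes the operator $\sum_i x_i\,\partial/\partial x_i$ acting via the padding substitution, whose top-degree iterate simply reads off the total degree: since $\mf{S}_{w_0}=x^{\rho}$ is the unique monomial of top degree $N$, applying $\Delta^{N-\ell(u)}$ and extracting the $\tilde{\mf{S}}_{w_0}$-coefficient amounts to evaluating a differential-operator identity that collapses to a multiple of $\mf{S}_u(1,\dots,1)$. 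Concretely, I expect the clean route to be: the generating identity
\[
\frac{\Delta^{N-\ell(u)}}{(N-\ell(u))!}\cdot\tilde{\mf{S}}_u = \mf{S}_u(1,\dots,1)\,\tilde{\mf{S}}_{w_0} + (\text{lower }y\text{-degree terms}),
\]
which follows because $\Delta$ raises total $x$-degree by one and the coefficient of the top monomial $x^{\rho}$ in $\mf{S}_u$ evaluated at all ones is precisely $\mf{S}_u(1,\dots,1)$ after accounting for the multinomial count of orderings of the $N-\ell(u)$ raising steps.

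The main obstacle I anticipate is bookkeeping the multinomial factor $(N-\ell(u))!$ correctly: each weighted path of length $N-\ell(u)$ in the strong order contributes a product of code weights, and I must check that summing these products reproduces $(N-\ell(u))!\,\mf{S}_u(1,\dots,1)$ rather than some other normalization. I would handle this by observing that $\nabla$ and $\Delta$ are the $f$ and $e$ of the $\mf{sl}_2$-action of Theorem \ref{thm:delta}, so $m_{\Delta}(u,w_0)$ and $m_{\nabla}(\e,u)$ are governed by the same representation-theoretic raising/lowering data read in opposite directions; matching the weighted path sums then reduces to comparing $e^{N-\ell(u)}$ and $f^{\ell(u)}$ coefficients in the tensor product $V_{n-1}\otimes\cdots\otimes V_0$, where the symmetry $e\leftrightarrow f$ under the longest-element involution is transparent.

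Finally, the two outer equalities follow formally from Proposition \ref{prop:w0-symmetry}: applying $m_{\ph}(v,w)=m_{\ph}(w_0w,w_0v)$ with $\ph=\Delta$ gives $m_{\Delta}(u,w_0)=m_{\Delta}(\e,w_0u)$, and with $\ph=\nabla$ gives $m_{\nabla}(\e,u)=m_{\nabla}(w_0u,w_0)$, using $\ell(w_0u)=N-\ell(u)$ to match denominators. The special case $u=w_0$ (equivalently $u=\e$) then yields $m_{\Delta}(\e,w_0)=N!\,\mf{S}_{\e}(1,\dots,1)=N!$, since $\mf{S}_{\e}=1$, giving the displayed corollary.
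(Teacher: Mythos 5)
Your proposal is correct and takes essentially the same route as the paper: the paper likewise derives the middle equalities from the monomial computation $\Delta^{N-|\alpha|}\cdot x^{\alpha}y^{\rho-\alpha}=(N-|\alpha|)!\,x^{\rho}$ (together with its $\nabla$-analogue $\nabla^{|\alpha|}\cdot x^{\alpha}y^{\rho-\alpha}=|\alpha|!\,y^{\rho}$) applied to the padded Schubert basis, and then obtains the outer equalities from Proposition \ref{prop:w0-symmetry}, exactly as you do. One inessential slip worth flagging: under the unpadding isomorphism $\Delta$ does not become the Euler operator $\sum_i x_i\,\partial/\partial x_i$, and your hedge about ``lower $y$-degree terms'' is unnecessary --- since every element of $\tilde{V}$ has $x$-degree plus $y$-degree equal to $N$, the polynomial $\Delta^{N-\ell(u)}\cdot\tilde{\mf{S}}_u$ has $y$-degree zero and is therefore exactly $(N-\ell(u))!\,\mf{S}_u(1,\dots,1)\,x^{\rho}$ with $x^{\rho}=\tilde{\mf{S}}_{w_0}$ --- but the displayed identity you actually use is correct, so neither point affects the argument.
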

\begin{proof}
It was observed in \cite{Hamaker2018} that $\nabla^{|\alpha|} \cdot  x^{\alpha}y^{\rho-\alpha} = |\alpha|! y^{\rho}$.  Similarly, it is clear that $\Delta^{N-|\alpha|} \cdot x^{\alpha}y^{\rho-\alpha} = (N-|\alpha|)! x^{\rho}$.  Applying this to the padded Schubert basis for $\tilde{V}$ yields the first result.  The second result follows immediately from Proposition \ref{prop:w0-symmetry}.
\end{proof}

The simple relation between $m_{\Delta}$ and $m_{\nabla}$ from Theorem \ref{thm:path-counting} does not hold for general pairs of permutations, however in Section \ref{sec:smith-forms} we show a very strong relationship in general between the matrices for $\Delta^{[i,j]}$ and $\nabla^{[N-j, N-i]}$ in the padded Schubert basis (or equivalently, permutation, basis).

\begin{remark}
For \emph{affine} Weyl groups, Lam and Shimozono \cite{Lam} give a dual-graded graph structure to the Hasse diagrams of the weak and strong order.  The relationship, if any exists, between their duality and ours is not currently understood.
\end{remark}

\section{Smith normal forms for $\nabla$ and $\Delta$}
\label{sec:smith-forms}

Stanley \cite{Stanley2017} asked for a description of the Smith normal form of $\nabla^{[\ell,N-\ell]}$, Theorem \ref{thm:snf} below answers this question in greater generality.

\begin{theorem} \label{thm:snf}
For $0 \leq \ell < \ell' \leq N$ and $\ell+\ell' \leq N$, in the padded Schubert basis (or permutation basis) for $\tilde{V}$ (or $\C W_n \cong \C S_n$) we have
\[
\tilde{\snf}{\Delta^{[\ell,\ell']}}=\tilde{\snf}{\Delta^{[N-\ell',N-\ell]}}=\tilde{\snf}{\nabla^{[\ell,\ell']}}=\tilde{\snf}{\nabla^{[N-\ell',N-\ell]}}
\]
and all are equal to
\[
(\ell'-\ell)! \snf(\mc{D})
\]
where $\mc{D}$ is the diagonal matrix with $\#(W_n)_i - \#(W_n)_{i-1}$ entries equal to ${\ell' -i \choose \ell-i}$ for $i=0,1,...,\ell$.
\end{theorem}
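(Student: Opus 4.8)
\section*{Proof proposal for Theorem \ref{thm:snf}}

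The plan is to reduce all four Smith normal forms to a single computation using symmetry, then to strip off the factor $(\ell'-\ell)!$ and identify the remaining matrix, and finally to compute its Smith normal form against the $\mf{sl}_2$-prediction. Since the Smith normal form is unchanged under left/right multiplication by elements of $GL(\Z)$, and since $\tilde{\mf{S}}_w=x^{\code(w)}y^{\rho-\code(w)}+(\text{lower})$ expresses the padded Schubert basis in terms of the monomial basis by a rank-graded, unitriangular (hence integral, invertible) transition, every Smith normal form below may be computed in the monomial basis. The $w_0$-symmetry of Proposition \ref{prop:w0-symmetry} shows that, up to relabeling rows and columns by $w\mapsto w_0w$ and transposing, the matrix of $\ph^{[\ell,\ell']}$ equals that of $\ph^{[N-\ell',N-\ell]}$; as transposition and permutation preserve the Smith normal form, this gives $\snf(\nabla^{[\ell,\ell']})=\snf(\nabla^{[N-\ell',N-\ell]})$ and likewise for $\Delta$. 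The remaining equality comes from the involution $\sigma$ of the monomial lattice swapping $x_i\leftrightarrow y_i$: this is an integral automorphism with $\sigma\nabla=\Delta\sigma$, so $\Delta^{[N-\ell',N-\ell]}=\sigma\,\nabla^{[\ell,\ell']}\sigma^{-1}$ and the two have equal Smith normal form. Thus it suffices to compute $\snf(\nabla^{[\ell,\ell']})$ in the monomial basis.

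Next I would factor out the scalar. The operators $y_i\partial_{x_i}$ commute, so $\nabla^{\ell'-\ell}=\big(\sum_i y_i\partial_{x_i}\big)^{\ell'-\ell}$, and a multinomial expansion shows that the coefficient of $x^{\alpha-\gamma}y^{\rho-\alpha+\gamma}$ in $\nabla^{\ell'-\ell}(x^\alpha y^{\rho-\alpha})$ equals $(\ell'-\ell)!\prod_i\binom{\alpha_i}{\gamma_i}$ whenever $|\gamma|=\ell'-\ell$. Hence $\nabla^{[\ell,\ell']}=(\ell'-\ell)!\,M$ for an integer matrix $M$, and $\snf(\nabla^{[\ell,\ell']})=(\ell'-\ell)!\,\snf(M)$; concretely $M$ is the rank-lowering-by-$(\ell'-\ell)$ graded piece of the integral substitution automorphism $x_i\mapsto x_i+y_i$ (that is, of $\exp(\nabla)$). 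It then remains to prove $\snf(M)=\snf(\mc{D})$.

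The target identity $\snf(M)=\snf(\mc{D})$ is exactly the $\mf{sl}_2$-prediction: on the irreducible summand $V_{N-2i}$ the operator $M$ acts by the scalar $\binom{\ell'-i}{\ell-i}=\frac{1}{(\ell'-\ell)!}\frac{(\ell'-i)!}{(\ell-i)!}$, and the number of summands whose lowest weight vector lies in rank $i$ is $\#(W_n)_i-\#(W_n)_{i-1}$, so an integral adapted basis would give $M$ block-diagonal with these scalars and hence cokernel $\bigoplus_i\big(\Z/\binom{\ell'-i}{\ell-i}\Z\big)^{\oplus(\#(W_n)_i-\#(W_n)_{i-1})}$, whose invariant factors are precisely $\snf(\mc{D})$. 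The main obstacle is that this decomposition is \emph{not} integral --- already for $S_3$ the rank-$2$ integral span of the two summands has index $3$ in $L_2$ --- so the Smith normal form cannot simply be read off the rational structure.

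To resolve this I would (i) bound the order of $\coker(M)$ from above by $\prod_i\binom{\ell'-i}{\ell-i}^{\#(W_n)_i-\#(W_n)_{i-1}}$, a generalization of Stanley's determinant computation obtainable from the $\mf{sl}_2$-character together with the identity $\nabla^{|\alpha|}x^\alpha y^{\rho-\alpha}=|\alpha|!\,y^\rho$; and (ii) produce enough explicit \emph{integral} lowest weight vectors, built from the bracket invariants $[jk]=x_jy_k-x_ky_j$ and the $y_j$ (the $\nabla$-invariants, by the first fundamental theorem for the unipotent action $x_i\mapsto x_i+ty_i$) and raised by $\Delta$, to force each elementary divisor to be divisible by the predicted one. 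Matching (i) and (ii) then pins down $\coker(M)$ exactly, giving $\snf(M)=\snf(\mc{D})$. I expect the genuinely hard step to be the integrality control in (ii): verifying that the bracket-built primitive vectors and their $\Delta$-raisings span the relevant \emph{saturated} sublattices over $\Z$, so that no spurious elementary divisors arise and the cokernel is exactly the predicted group.
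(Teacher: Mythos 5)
Your reduction steps are correct and run parallel to the paper's: the unitriangular (hence $GL(\Z)$) transition between the padded Schubert and monomial bases, the $w_0$-symmetry of Proposition \ref{prop:w0-symmetry} giving $\tilde{\snf}(\ph^{[\ell,\ell']})=\tilde{\snf}(\ph^{[N-\ell',N-\ell]})$, and your conjugation by the involution $\sigma\colon x_i\leftrightarrow y_i$ (a permutation matrix on monomials) is a perfectly good substitute for the paper's observation that $\Delta^{[N-\ell',N-\ell]}$ and $\nabla^{[\ell,\ell']}$ are transposes in the monomial basis. Factoring out $(\ell'-\ell)!$ is also fine, and your step (i) is realizable: the needed bound follows from the determinant computation via the $\mf{sl}_2$-decomposition, essentially as carried out inside the paper's proof of Lemma \ref{lem:base-change-higher} (for the rectangular case one passes through $\det\bigl(U^{[n,n']}D^{[n,n']}\bigr)$ and Cauchy--Binet). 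But the heart of the theorem is exactly your step (ii), and there is a genuine gap there: it is a program, not a proof, and in the form you state it --- divided $\Delta$-raisings of bracket-built lowest weight vectors spanning saturated sublattices --- it is undermined by your own computation. For $n=3$ at rank $1$, the raised vector $\Delta(y_1^2y_2)=2x_1y_1y_2+x_2y_1^2$ together with the bracket lowest weight vector $y_1(x_1y_2-x_2y_1)$ spans a sublattice of index $3$ in $\Z P_1$, so the $\mf{sl}_2$-isotypic integral vectors do \emph{not} assemble into an adapted $\Z$-basis; each summand may be individually saturated while their sum is not, and the pointwise divisibility $d_j\mid b_j$ you need amounts to a uniform divisibility of all $j\times j$ minors, i.e., saturation of the whole filtration --- precisely the statement you defer.

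The paper closes this gap by a different and essential idea that your proposal is missing: Lemma \ref{lem:base-change-main} constructs, for the product of chains $P^M$ with $M=(n-1,n-2,\ldots,1,0)$, sets of \emph{monomials} $A_m^M$ (deliberately \emph{not} lowest weight vectors) such that the divided raisings $\frac{U_M^{n-m}}{(n-m)!}f$, $f\in A_m^M$, form an integral basis of $\Z P_n^M$; the construction is an induction on the number of chains, $|M|$, and the rank, splitting $P_n^M$ according to whether $x_k^{M_k}$ divides a monomial, with Lemma \ref{lem:base-change-higher} handling ranks above the middle via the determinant identity. In this basis $U_M^{[\ell,\ell']}$ is literally diagonal in its maximal square block, with $\#P_i^M-\#P_{i-1}^M$ entries $(\ell'-\ell)!\binom{\ell'-i}{\ell-i}$, so both divisibility directions come for free and no control of the lowest-weight lattice is ever needed. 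Choosing monomial representatives rather than $\nabla$-invariants is exactly what sidesteps the index-$3$ obstruction you identified; to complete your route you would have to prove an integral statement at least as strong as Lemma \ref{lem:base-change-main}, and the bracket vectors are the wrong candidates for it.
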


\begin{example} \label{ref:main-theorem-example}
According to Figure \ref{fig:weights}, we have 
\begin{align*}
    &\Delta^{[1,2]} = \begin{pmatrix} 1 & 1 \\ 1 & 3 \end{pmatrix}
    &\nabla^{[1,2]} = \begin{pmatrix} 2 & 0 \\ 0 & 1 \end{pmatrix}
\end{align*}
It is easy to check that both of these matrices have Smith normal form $\begin{pmatrix} 1 & 0 \\ 0 & 2 \end{pmatrix}$.  Even in this small example it is clear that the Chevalley weights do not have this property.
\end{example}


We will prove Theorem \ref{thm:snf} by studying Smith normal forms of raising operators in products of chains.

If $P$ and $Q$ are two rank-symmetric and rank-unimodal posets, with order raising operators $U_P$ and $U_Q$, their Cartesian product $P\times Q$ is also rank-symmetric and rank-unimodal. Since each covering relation in $P \times Q$ comes from a covering relation in $P$ or in $Q$, we can define an order raising operator $U_{P\times Q}$ which inherits the edge weights of $U_P$ and $U_Q$.

By a chain of length $M_1$ we mean a totally ordered set of $M_1+1$ elements, which we often identify with monomials $1,x,...,x^{M_1}$. Unless otherwise noted, such a chain has standard raising operator $U: x^a \mapsto (a+1)x^{a+1}$.  Let $P^{(M_1,\ldots,M_k)}$ be the poset obtained by taking the product of chains of length $M_1,\ldots,M_k$; elements in the $n$-th rank are labeled by monomials of degree $n$ in $x_1,...,x_k$.  We write $M$ for $(M_1,\ldots,M_k)$, $|M|$ for $M_1+\cdots+M_k$ and $\alpha\leq M$ for the condition that $\alpha_i\leq M_i$ for all $i$, so $P_n^M=\{x^{\alpha}\ |\ 0\leq\alpha\leq M, |\alpha|=n\}.$  We denote the raising operator inherited from the standard raising operators on the individual chains by $U_M$.  Explicitly, $U_M$ acts as the transpose of $\nabla$:
$$U_M(x^\alpha)=\sum_{i=1}^k(\alpha_i+1)x_ix^{\alpha}$$
where the terms $x^{\beta}$ with $\beta\nleq M$ are considered to be 0. We also have
$$\frac{U_M^n}{n!}x^{\alpha}=\sum_{\alpha\leq\beta\leq M,|\beta|=|\alpha|+n}{\beta\choose\alpha}x^{\beta}$$
where ${\beta\choose\alpha}={\beta_1\choose\alpha_1}\cdots{\beta_k\choose\alpha_k}.$ To see this, note that the coefficient in front of $x^{\beta}$ after applying $U_M^n$ to $x^{\alpha}$ can be viewed as a weighted sum of lattice paths from $\alpha$ to $\beta$, where each of the ${|\beta|-|\alpha|\choose{\beta_1-\alpha_1,\ldots,\beta_k-\alpha_k}}$ paths has weight $\big((\alpha_1+1)\cdots(\beta_1)\big)\cdots\big((\alpha_k+1)\cdots(\beta_k)\big)$.

Lemma \ref{lem:base-change-main} introduces a new integer basis $B_n^M$ for $\Z P_n^M$ with respect to which the Smith normal form of $U_M$ can more easily be computed. 

\begin{lemma}\label{lem:base-change-main}
Fix $M=(M_1,\ldots,M_k)$. For each $0\leq n\leq |M|/2$, there exists a subset $A_n^M\subset P_n^M$ of cardinality $\#P_n^M-\#P_{n-1}^M$, such that
$$B_n^M:=\bigcup_{m\leq n}\left\{\frac{U_M^{n-m}}{(n-m)!}f\ \Big|\ f\in A_m^M\right\}$$
is an integral basis for $\Z P_n^M$.
\end{lemma}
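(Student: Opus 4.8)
The plan is to induct on the number $k$ of chains, proving the stronger statement that suitable monomial sets $A_m^M \subseteq P_m^M$ exist for which the family $B_n^M$ defined in the statement (reading $m\le\min(n,|M|-n)$) is an integral basis of $\Z P_n^M$ for \emph{every} rank $0\le n\le|M|$, not merely for $n\le|M|/2$. This symmetric strengthening is forced on us because the inductive step refers to ranks of the smaller poset lying above its middle. The base case $k=1$ is immediate: a single chain has $A_0=\{1\}$ and $A_m=\varnothing$ for $m\ge1$, and $\frac{U^n}{n!}\cdot 1=x_1^n$ is the unique rank-$n$ monomial.

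For the inductive step, write $M=(M',L)$ with $M'=(M_1,\dots,M_{k-1})$ and $L=M_k$, so $P^M=P^{M'}\times C$ for $C$ the chain in the variable $x_k$, and $\Z P^M=\Z P^{M'}\otimes_\Z\Z C$. The raising operator factors as $U_M=U_{M'}\otimes 1+1\otimes U_C$, and since the summands commute the binomial theorem gives $\frac{U_M^N}{N!}=\sum_{p+q=N}\frac{U_{M'}^p}{p!}\otimes\frac{U_C^q}{q!}$. I would define the generators of $P^M$ by shifting those of $P^{M'}$ along the new chain: for each $f\in A_{m'}^{M'}$ put $f\,x_k^t$ into $A_{m'+t}^M$ for $0\le t\le\min(|M'|-2m',L)$. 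Expanding $\frac{U_M^{\,n-m'-t}}{(n-m'-t)!}(fx_k^t)$ and using $\frac{U_C^q}{q!}x_k^t=\binom{t+q}{t}x_k^{t+q}$, one obtains, with $s=t+q$,
\[
\frac{U_M^{\,n-m'-t}}{(n-m'-t)!}(f x_k^t)=\sum_{s}\binom{s}{t}\left(\tfrac{U_{M'}^{\,n-s-m'}}{(n-s-m')!}f\right)\otimes x_k^{s}.
\]
Thus each element of $B_n^M$ is a combination of the vectors $\big(\tfrac{U_{M'}^{\,r-m'}}{(r-m')!}f\big)\otimes x_k^{s}$ with $r+s=n$; by the inductive hypothesis (applied at each rank $r=n-s\le|M'|$, vanishing terms dropped) these vectors form an integral basis $\mathcal B_n$ of $\Z P_n^M$ graded by $x_k$-degree.

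It then remains to check that the transition matrix from $B_n^M$ to $\mathcal B_n$ is unimodular. Grouping both bases into blocks indexed by the chains $(m',f)$ of $P^{M'}$, the display shows that the generator $fx_k^t$ contributes, within its own block, the binomial coefficients $\binom{s}{t}$. I would record two facts. First, whenever the exponent drives $r=n-s$ strictly above the top $|M'|-m'$ of the chain $(m',f)$ — an \emph{overflow} that can occur precisely because $f$ is a genuine monomial rather than a lowest-weight vector — the vector $\tfrac{U_{M'}^{\,r-m'}}{(r-m')!}f$ lies in $\Z P^{M'}_r$ with $r>|M'|/2$, hence expands only in basis elements of chains $(m'',f'')$ with $m''\le|M'|-r<m'$. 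Consequently, ordering the blocks by increasing $m'$, the transition matrix is block lower-triangular. Second, the diagonal block attached to $(m',f)$ comes exactly from the non-overflow terms and is the square matrix $\big(\binom{s}{t}\big)$ with $t,s$ running over suitable contiguous ranges; a short computation confirms these ranges have equal length for all $n$, and repeated use of Pascal's identity $\binom{s}{t}-\binom{s-1}{t}=\binom{s-1}{t-1}$ as column operations shows each diagonal block has determinant $\pm1$. Hence the whole transition matrix has determinant $\pm1$ and $B_n^M$ is an integral basis, completing the induction; the cardinality $\#A_m^M=\#P_m^M-\#P_{m-1}^M$ then follows by comparing dimensions at rank $n=m\le|M|/2$.

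I expect the main obstacle to be precisely the overflow analysis: because the chosen generators are monomials and not lowest-weight vectors, the naive hope that the transition matrix is block \emph{diagonal} fails, and the argument instead rests on showing that the off-diagonal contributions strictly decrease the chain index $m'$, so that triangularity rather than diagonality rescues unimodularity. Pinning down the contiguous index ranges for the diagonal binomial blocks and verifying that they match in size for all $n$, including above the middle rank, is the other point demanding care.
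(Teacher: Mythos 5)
Your proof is correct, but it takes a genuinely different route from the paper's. The paper argues by a triple induction (on $k$, then $|M|$, then $n$), peeling a single element off the last chain: it sets $A_n^M=A_n^{(M_1,\ldots,M_{k-1},M_k-1)}\cup A_{n-M_k}^{(M_1,\ldots,M_{k-1})}\cdot x_k^{M_k}$, first reduces to generating from $\overline{B_n^M}\cup A_n^M$ (with all of $P_m^M$, $m<n$, in place of the chosen $A_m^M$), and handles the boundary case $n=|M|/2$ --- where $n$ exceeds $|M'|/2$ for the shortened tuple --- by invoking Lemma~\ref{lem:base-change-higher}, whose proof runs through an $\mathfrak{sl}_2$ determinant computation; so in the paper the below-middle and above-middle statements are proved in an intertwined way. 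You instead prove a symmetric strengthening (integral bases at \emph{every} rank, reading $m\le\min(n,|M|-n)$) by a single induction on $k$, splitting off the entire last chain as a tensor factor and choosing Clebsch--Gordan-style generators $fx_k^t$, $0\le t\le\min(|M'|-2m',L)$, with unimodularity coming from block-triangularity (your overflow analysis, which is the correct crux: $r>|M'|-m'$ forces $r>|M'|/2$, so the expansion involves only chain indices $m''\le|M'|-r<m'$) together with determinant-one binomial blocks. Your strengthening subsumes Lemma~\ref{lem:base-change-higher} combinatorially, so you obtain it without the $\mathfrak{sl}_2$/determinant argument and with the sets $A_n^M$ in closed form; the paper's route avoids your two computational verifications and reuses representation-theoretic machinery it needs anyway. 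For the record, the two steps you left as assertions do go through: with $a=|M'|-2m'$ and $u=n-m'$, the row range is $0\le t\le\min(a,L,u,a+L-u)$, the column range is $\max(0,u-a)\le s\le\min(L,u)$, and the identity $\min(L,u)-\max(0,u-a)=\min(a,L,u,a+L-u)$ holds in all four cases, so the diagonal blocks are square; and each block $\bigl(\binom{s_0+j}{i}\bigr)_{0\le i,j\le T}$ has determinant $1$, since its first row is all ones and subtracting adjacent columns via Pascal's rule reduces it to the block of size $T$.
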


As a product of chains, $P^M$ is rank-symmetric and rank-unimodal, so for $n\leq|M|/2$ the quantity $\# P_n^M-\# P_{n-1}^M$ is always nonnegative. As an example, when $k=1$ (so $P^M$ is a chain) we have $A_0^M=\{1\}$ and $A_n^M=\emptyset$ for all $n\geq1$, thus we recover the monomial basis $\frac{U^n}{n!}\cdot1=x_1^n$. Before proving Lemma~\ref{lem:base-change-main}, we notice some of its consequences. 
\begin{lemma}\label{lem:base-change-higher}
Fix $M=(M_1,\ldots,M_k)$. Suppose that for all $0\leq n\leq |M|/2$, there exists $A_n^M\subset P_n^M$ satisfying the conditions in Lemma~\ref{lem:base-change-main}, then for $n'\geq|M|/2$,
$$B_{n'}^M:=\bigcup_{m\leq |M|-n'}\left\{\frac{U^{n'-m}_M}{(n'-m)!}f\ \Big|\ f\in A_m^M\right\}$$
is an integral basis for $\Z P_{n'}^M.$
\end{lemma}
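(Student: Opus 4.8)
The plan is to derive the statement from Lemma~\ref{lem:base-change-main} together with standard $\mf{sl}_2$ representation theory, reducing everything to a single determinant (equivalently, lattice-index) computation. Set $n:=|M|-n'$, so that $n\le|M|/2$ and the index set $m\le|M|-n'$ defining $B_{n'}^M$ is exactly $m\le n$; thus every $A_m^M$ that occurs is one furnished by Lemma~\ref{lem:base-change-main}. A telescoping sum and rank-symmetry give $\#B_{n'}^M=\sum_{m\le n}\#A_m^M=\#P_n^M=\#P_{n'}^M$, so the cardinality is correct. Writing $j:=n'-n$ and $\Phi:=U_M^{j}/j!\colon\C P_n^M\to\C P_{n'}^M$, the relation $\frac{U_M^a}{a!}\frac{U_M^b}{b!}=\binom{a+b}{a}\frac{U_M^{a+b}}{(a+b)!}$ shows that $\Phi$ sends the basis vector $\frac{U_M^{n-m}}{(n-m)!}f\in B_n^M$ to $\binom{n'-m}{n-m}\,\frac{U_M^{n'-m}}{(n'-m)!}f$; that is, in the pair of sets $(B_n^M,B_{n'}^M)$ the map $\Phi$ is diagonal with nonzero diagonal entries $\binom{n'-m}{n-m}$. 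Since $P^M$ is rank-symmetric and unimodal, $\Phi$ is a $\C$-isomorphism (the Lefschetz property of products of chains), so $B_{n'}^M$ is a $\C$-basis of $\C P_{n'}^M$. Each of its elements lies in $\Z P_{n'}^M$ by the integral formula $\frac{U_M^{j}}{j!}x^\alpha=\sum_\beta\binom{\beta}{\alpha}x^\beta$; hence, setting $L_{n'}:=\spn_\Z B_{n'}^M\subseteq\Z P_{n'}^M$, it remains only to show $[\Z P_{n'}^M:L_{n'}]=1$.

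I would extract this from a factorization of lattice indices. Let $\tilde\Phi$ be the matrix of $\Phi$ in the monomial bases; it is an integer matrix, and by rank-symmetry it is square and nonsingular, so $|\det\tilde\Phi|=[\Z P_{n'}^M:\Phi(\Z P_n^M)]$. On the other hand, since $B_n^M$ is a $\Z$-basis of $\Z P_n^M$ and $\Phi$ is diagonal with entries $\binom{n'-m}{n-m}$ with respect to $(B_n^M,B_{n'}^M)$, the sublattice $\Phi(\Z P_n^M)$ has index $\prod_{m\le n}\binom{n'-m}{n-m}^{\#A_m^M}$ inside $L_{n'}$. Hence
\[
|\det\tilde\Phi|=[\Z P_{n'}^M:L_{n'}]\cdot\prod_{m\le n}\binom{n'-m}{n-m}^{\#A_m^M},
\]
and it suffices to prove the determinant identity $|\det\tilde\Phi|=\prod_{m\le n}\binom{n'-m}{n-m}^{\#A_m^M}$, which is the crux of the argument.

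To compute the determinant I would use the $\mf{sl}_2$-triple $(e,f,h)=(U_M,L_M,H)$ acting on $\C P^M\cong V_{M_1}\otimes\cdots\otimes V_{M_k}$, where $L_M(x^\alpha)=\sum_i(M_i-\alpha_i+1)x^{\alpha-e_i}$ is the lowering operator that completes $U_M$ to an $\mf{sl}_2$ (crucially $L_M$, not the transpose $U_M^{\mathsf T}$), and $H$ scales rank-$r$ elements by $2r-|M|$. Decompose $\C P^M$ into irreducible strings; a string with lowest element at rank $m$ has highest weight $\lambda=|M|-2m$, and there are $\#P_m^M-\#P_{m-1}^M=\#A_m^M$ of them. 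The operator $\frac{L_M^{j}U_M^{j}}{(j!)^2}$ acts as a scalar on each weight space of each irreducible summand, and on the rank-$n$ weight vector of such a string that scalar is $\frac{1}{(j!)^2}\prod_{i=1}^{j}(n-m+i)(n'-m-i+1)=\binom{n'-m}{n-m}^2$, by the standard $f^{j}e^{j}$ computation inside an $\mf{sl}_2$-irreducible. Since $\C P_n^M$ is spanned by such weight vectors, the determinant of $\frac{L_M^{j}}{j!}\frac{U_M^{j}}{j!}$ on $\C P_n^M$ equals $\prod_{m\le n}\binom{n'-m}{n-m}^{2\#A_m^M}$.

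It remains to strip off the square, which I would do by a complementation symmetry. If $\tilde\Phi_L$ denotes the monomial-basis matrix of $\frac{L_M^{j}}{j!}\colon\C P_{n'}^M\to\C P_n^M$, a direct expansion gives its $(\alpha,\gamma)$-entry $\prod_i\binom{M_i-\alpha_i}{\gamma_i-\alpha_i}$, so that $\tilde\Phi_L[\alpha,\gamma]=\tilde\Phi[M-\alpha,M-\gamma]$; since $\alpha\mapsto M-\alpha$ is a bijection $P_n^M\leftrightarrow P_{n'}^M$, this exhibits $\tilde\Phi_L$ as $\tilde\Phi$ with its rows and columns permuted, whence $|\det\tilde\Phi_L|=|\det\tilde\Phi|$. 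Combining with the previous paragraph, $|\det\tilde\Phi|^2=|\det(\tilde\Phi_L\tilde\Phi)|=\prod_{m\le n}\binom{n'-m}{n-m}^{2\#A_m^M}$, so $|\det\tilde\Phi|=\prod_{m\le n}\binom{n'-m}{n-m}^{\#A_m^M}$ and therefore $[\Z P_{n'}^M:L_{n'}]=1$, as needed. I expect this determinant identity to be the only real obstacle: it forces one to use the genuinely $\mf{sl}_2$-adapted lowering operator $L_M$ (for the transpose $U_M^{\mathsf T}$ the product $\frac{1}{(j!)^2}(U_M^{\mathsf T})^{j}U_M^{j}$ is not scalar on weight spaces and no clean eigenvalue appears), and then to notice the complementation symmetry equating $|\det\tilde\Phi_L|$ with $|\det\tilde\Phi|$.
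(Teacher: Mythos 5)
Your proposal is correct and takes essentially the same route as the paper's proof: you complete $U_M$ to an $\mathfrak{sl}_2$-triple with the adapted lowering operator (the paper's $D_M$, your $L_M$), evaluate the determinant of the composite $\frac{L_M^{j}U_M^{j}}{(j!)^2}$ by decomposing $\C P^M$ into irreducible strings, use the complementation symmetry $\alpha\mapsto M-\alpha$ to identify $|\det\tilde\Phi_L|$ with $|\det\tilde\Phi|$ (the paper's ``transposes'' observation), and conclude unimodularity from the diagonal form of $U_M^{[n,n']}$ with respect to the pair $(B_n^M,B_{n'}^M)$. The only differences—working with divided powers $U_M^{j}/j!$ and lattice indices $[\Z P_{n'}^M:L_{n'}]$ rather than the paper's change-of-basis matrices $V_n,V_{n'}$ and diagonal matrix $R_M^{[n,n']}$, and computing $f^{j}e^{j}$ on rank $n$ instead of $e^{j}f^{j}$ on rank $n'$—are cosmetic.
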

\begin{proof}
Fix $n'\geq|M|/2$ and let $n=|M|-n' \leq|M|/2$. There is a natural lowering operator $D_M$ such that $[U_M,D_m]=H_M$ and $U_M,D_M,H_M$ together make $\C P_M$ an $\mathfrak{sl}_2$ representation, where $H_M$ acts diagonally. Specifically, 
$$D_M(x^{\beta})=\sum_{i=1}^k(M_i-\beta_i+1)x^{\beta}/x_i$$
where the terms with negative components are considered to be 0. Comparing $U_M$ with $D_M$, we see that, as matrices, $U_M^{[\ell,\ell']}$ and $D_M^{[|M|-\ell',|M|-\ell]}$ are transposes of each other. In particular, when going between rank $n$ and rank $n'$, $|\det U_M^{[n,n']}|=|\det D_M^{[n,n']}|.$

At the same time, by decomposing $\C P^M$ into irreducible representations, we observe that 
\begin{align*}
    \det\left(U_M^{[n,n']}\right)\det\left(D_M^{[n,n']}\right)&=\det \left( U_M^{[n,n']}D_M^{[n,n']} \right)
    \\ &=\prod_{0\leq m\leq n}\left(\frac{(n'-m)!}{(n-m)!}\right)^{2(\# P_m^M-\# P_{m-1}^M)}.
\end{align*}
Here we have benefited from the fact that $U_M^{[n,n']}D_M^{[n,n']}$ is a map from a vector space to itself, so its determinant is independent of the basis used.  Therefore, we obtain:
$$\big|\det\left(U_M^{[n,n']}\right)\big|=\prod_{0\leq m\leq n}\left(\frac{(n'-m)!}{(n-m)!}\right)^{\# P_m^M-\# P_{m-1}^M}.$$

We now apply Lemma~\ref{lem:base-change-main}. Consider the linear map $\Z B_{n}^M\rightarrow\Z P_{n}^M\rightarrow\Z P_{n'}^M$ that is a change of basis $V_n^{-1}$ from $B_n^M$ to $P_n^M$ composed with $U_M^{[n,n']}$.  Notice that since both $B_n^M$ and $P_n^M$ are integral basis for $\Z P_n^M$, the change of basis matrix $V_n$ and $V_n^{-1}$ both have integer entries and determinant $\pm1$. Another way to write this linear map is through $\Z B_n^M\rightarrow\Z B_{n'}^M\rightarrow\Z P_{n'}^M$ and let us write it as $V_{n'}R_M^{[n,n']}$. Here, $V_{n'}$ represents how each $\frac{U_M^{n'-m}}{(n'-m)!}f$, $f\in A_m^M\subset P_m^M$ is written in the basis $P_{n'}^M$ so it has integer entries (which are products of binomial coefficients). In the meantime, by definition of $B_n^M$ and $B_{n'}^M$, $R_M^{[n,n']}$ is a diagonal matrix with $\# P_m^M-\# P_{m-1}^M$ copies of $(n'-m)!/(n-m)!$ on the diagonal. This shows $\det U_M^{[n,n']}=\det R_M^{[n,n']}$. With $U_M^{[n,n']}V_n^{-1}=V_{n'}R_M^{[n,n']}$, we have $\det V_{n'}=\pm1$ and this precisely means that $B_{n'}^M$ is an integral basis of $\Z P_{n'}^M.$
\end{proof}

We now prove Lemma~\ref{lem:base-change-main}.

\begin{proof}[Proof of Lemma~\ref{lem:base-change-main}]
We use induction on $k$ (the number of chains), $|M|$ and then $n$, in this order. When $k=1$, let $A_0^M=\{1\}$ and $A_m^M=\emptyset$ for $m\geq1$. Then $B_n^M=P_n^M$ is an integral basis for $\Z P^M_n$, as desired. 

Now assume $k\geq2$ and also assume without loss of generality that $M_1\geq M_2\geq\cdots\geq M_k\geq1$, since when $M_k=0$, we reduce to the case of $k-1$. When $n=0$, choose $A_0^M=\{1\}$. Assume $1\leq n\leq |M|/2$ and by the induction hypothesis assume that $A_m^M$ has already been chosen with the desired properties for all $m<n$. Recall that for any $f\in P_m^M$, $U_M^{n-m}f/(n-m)!$ is an integral linear combination of $P_n^M$. By simple counting of dimensions, it suffices to show that we can choose $A_n^M$ with cardinality $\# P_n^M-\# P_{n-1}^M$ such that every element in $P_n^M$ can be written as an integral linear combination of 
$$\bigcup_{m<n}\left\{\frac{U_M^{n-m}}{(n-m)!}f\ \Big|\ f\in A_m^M\right\}\cup A_n^M.$$

By the induction hypothesis, any $f\in P_m^M$ with $m<n$ can be written as a linear combination
$$f=\sum_{m'<m,\ g\in A_{m'}^M}c_g\frac{U_M^{m-m'}}{(m-m')!}g$$
with $c_g \in \Z$.  Applying $U_M^{n-m}/(n-m)!$ on both sides, we obtain
$$\frac{U_M^{n-m}}{(n-m)!}f=\sum_{m'<m,\ g\in A_{m'}^M}c_g{n-m'\choose n-m}\frac{U_M^{n-m'}}{(n-m')!}g.$$
Hence it suffices to find $A_n^M\subset P_n^M$ and show that every element in $P_n^M$ is an integral linear combination of $\overline{B_n^M}\cup A_n^M$, where
$$\overline{B_n^M}:=\bigcup_{m<n}\left\{\frac{U_M^{n-m}}{(n-m)!}f\ \Big|\ f\in P_m^M\right\}$$
is larger than $P_{n-1}^M$. This step of reduction allows us to not be concerned with the choices of $A_m^M$ for $m<n$, and focus only on $A_n^M.$

If $n<M_k$, we reduce to the case $M'=(M_1,M_2,\ldots,M_{k-1},n)$ (since $P^M$ and $P^{M'}$ are isomorphic below rank $n$), and let $A_n^M=A_n^{M'}$. The key case is the range $M_k\leq n\leq |M|/2$. Let
$$A_n^M=A_n^{(M_1,\ldots,M_{k-1},M_k-1)}\cup A_{n-M_k}^{(M_1,\ldots,M_{k-1})}\cdot x_k^{M_k}$$
where the second terms means $\{f\cdot x_k^{M_k}\ |\ f\in A_{n-M_k}^{(M_1,\ldots,M_{k-1})}\}.$ We always have $n-M_k\leq (M_1+\cdots+M_{k-1})/2$ but it is possible that $(|M|-1)/2<n \leq |M|/2$, which implies that $2n=|M|$. For simplicity, write $M'=(M_1,\ldots,M_{k-1},M_k-1)$ and $M''=(M_1,\ldots,M_{k-1})$. Notice that the entries of both $M'$ and $M''$ are still decreasing. We need to show that $A_n^M$ has the desired cardinality and that integral linear combinations of $\overline{B_n^M}\cup A_n^M$ generate $P_n^M$.

For any $0\leq\alpha\leq M$, depending on whether $\alpha_k=M_k$ or not, we can partition $P_n^M$ into two subsets $P_n^{M'}\cup P_{n-M_k}^{M''}\cdot x_k^{M_k}$, with the convention that $P_{<0}^{M''}=\emptyset$. This means $\# P_n^M=\# P_n^{M'}+\# P_{n-M_k}^{M''}$. Then
$$\# P_n^M-\#P_{n-1}^M=(\# P_n^{M'}-\# P_{n-1}^{M'})+(\#P_{n-M_k}^{M''}-\#P_{n-M_k-1}^{M''}).$$ By the induction hypothesis, $\#P_{n-M_k}^{M''}-\#P_{n-M_k-1}^{M''}$ is the cardinality of $A_{n-M_k}^{M''}\cdot x_k^{N_k}$ and $\# P_n^{M'}-\# P_{n-1}^{M'}$ is the cardinality of $A_n^{M'}$ when $n\leq |M'|/2$. When $|M|=2n$, we have $|M'|=2n-1$ and so ranks $n-1$ and $n$ are both in the middle of the rank-symmetric poset $P^{M''}$, meaning $\# P_n^{M'}-\# P_{n-1}^{M'}=0$. In this case we let $A_n^{M'}=\emptyset$ and thus $\# P_n^{M'}-\# P_{n-1}^{M'}$ is indeed $\# A_n^{M'}$ whenever $n\leq |M|/2$. Therefore, $\#A_n^M=\# P_n^M-\# P_{n-1}^M$ as desired.

To show $\overline{B_n^M}$ and $A_n^M$ generate, let us first deal with the monomials in $P_n^M$ that are multiples of $x_k^{M_k}$. Notice that the action of $U_{M''}$ on $h\in P_{m-M_k}^{M''}$ is exactly the same as the action of $U_M$ on $hx_k^{M_k}\in P_m^{M}$ since the exponents of $x_k$ can never go up. By the induction hypothesis on $M''$, $\overline{B_{n-M_k}^{M''}}\cup A_{n-M_k}^{M''}$ can generate $h\in P_{n-M_k}^{M''}$ so $\overline{B_{n-M_k}^{M''}}\cdot x_k^{N_k}\cup A_{n-M_k}^{M''}\cdot x_k^{N_k}\subset\overline{B_n^M}\cup A_n^M$ can generate $hx_k^{N_k}\in P_n^M$. 

Next, we deal with monomials that are not multiples of $x_k^{M_k}$. In other words, these are the monomials in $P_n^{M'}$. We claim that each $h\in P_n^{M'}$ can be written as an integral linear combination of $\overline{B_n^{M'}}\cup A_n^{M'}$, where the definition of $\overline{B_n^{M'}}$ (introduced earlier in this proof) remains the same for $n>|M'|/2$. To see this, when $n\leq |M'|/2$, it is simply the induction hypothesis, and when $n=|M|/2$, $A_n^{M'}=\emptyset$ and the claim follows from Lemma~\ref{lem:base-change-higher} as $B_n^{M'}\subset\overline{B_n^{M'}}$. Recall that the goal is to show that $h\in P_n^{M'}$ is an integral linear combination of $\overline{B_n^{M}}\cup A_n^{M}$ and so far we know that it is an integral linear combination of $\overline{B_n^{M'}}\cup A_n^{M'}$. Correspondingly, write
$$h=\sum_{m<n,\ g\in P_m^{M'}}c_g\cdot\frac{U_{M'}^{n-m}}{(n-m)!}\cdot g+\sum_{f\in A_n^{M'}}c_f\cdot f.$$
Using the same coefficients, consider
$$h'=\sum_{m<n,\ g\in P_m^{M'}\subset P_m^M}c_g\cdot\frac{U_{M}^{n-m}}{(n-m)!}\cdot g+\sum_{f\in A_n^{M'}\subset A_n^M}c_f\cdot f,$$
which lies in the integral span of $\overline{B_n^{M}}\cup A_n^{M}$. The difference between $h'$ and $h$ is an integral linear combination of 
$$\frac{U_M^{n-m}}{(n-m)!}\cdot x^{\alpha}-\frac{U_{M'}^{n-m}}{(n-m)!}x^{\alpha}=\sum_{\alpha\leq\beta\leq M,\ \beta_k=M_k,\ |\beta|=n}{\beta\choose\alpha}x^{\beta}$$
for $x^{\alpha}\in P_m^{M'}$. As we already know that $x^{\beta}$ for $\beta_k=M_k$ lies in the integral span of $\overline{B_n^{M}}\cup A_n^{M}$ (see last paragraph), we conclude that any $h\in P_n^M$ lies in this span as well. This finishes the proof.
\end{proof}

The above base change procedure allows easy computation of the Smith normal form of powers of $U_M$.

\begin{lemma}\label{lem:product-snf}
For $M=(M_1,\ldots,M_k)$, let $U_M$ be the standard order raising operator on the product of chains $P^M$. For $0\leq\ell<\ell'\leq |M|$ and $\ell+\ell'\leq |M|$, we have
\[
\tilde{\snf}\left(U_M^{[\ell,\ell']}\right)=\tilde{\snf}\left(U_M^{[|M|-\ell',|M|-\ell]}\right)=(\ell'-\ell)!\snf(\mc{D})
\]
where $\mc{D}$ is the diagonal matrix with $\# P^M_i - \#P^M_{i-1}$ entries equal to ${\ell' -i \choose \ell-i}$ for $i=0,1,\ldots,\ell$.
\end{lemma}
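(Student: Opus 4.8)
The whole computation takes place in the integral bases $B_n^M$ furnished by Lemmas~\ref{lem:base-change-main} and~\ref{lem:base-change-higher}, together with the single scaling identity
\[
U_M^{\ell'-\ell}\cdot\frac{U_M^{\ell-m}}{(\ell-m)!}f=\frac{(\ell'-m)!}{(\ell-m)!}\cdot\frac{U_M^{\ell'-m}}{(\ell'-m)!}f=(\ell'-\ell)!{\ell'-m\choose\ell-m}\cdot\frac{U_M^{\ell'-m}}{(\ell'-m)!}f,
\]
valid for every $f\in A_m^M$. The plan is to read off the matrix of a power of $U_M$ in these bases and recognize it as essentially diagonal.

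First I would treat $U_M^{[\ell,\ell']}$ itself. The source basis is $B_\ell^M$ (available since $\ell\le\lfloor|M|/2\rfloor$, which follows from $\ell<\ell'$ and $\ell+\ell'\le|M|$) and the target basis is $B_{\ell'}^M$. The key point is that the image $\frac{U_M^{\ell'-m}}{(\ell'-m)!}f$ of a basis vector $\frac{U_M^{\ell-m}}{(\ell-m)!}f$ (with $f\in A_m^M$, $m\le\ell$) is again a member of $B_{\ell'}^M$: this is immediate from Lemma~\ref{lem:base-change-main} when $\ell'\le|M|/2$, and when $\ell'>|M|/2$ it needs the index bound $m\le\ell\le|M|-\ell'$ from Lemma~\ref{lem:base-change-higher}, which is exactly the hypothesis $\ell+\ell'\le|M|$. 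Thus, after the two unimodular changes of basis $P_\ell^M\to B_\ell^M$ and $P_{\ell'}^M\to B_{\ell'}^M$, the operator $U_M^{[\ell,\ell']}$ is represented by a rectangular matrix that is diagonal: the block indexed by $A_m^M$ is scaled by $(\ell'-\ell)!{\ell'-m\choose\ell-m}$, and the remaining rows (those indexed by $A_m^M$ with $\ell<m$) are zero. Since $\#A_m^M=\#P_m^M-\#P_{m-1}^M$ and $\snf(cB)=c\,\snf(B)$ for a positive integer $c$, this yields $\tilde{\snf}\left(U_M^{[\ell,\ell']}\right)=(\ell'-\ell)!\snf(\mc D)$ directly.

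The main obstacle is the mirror operator $U_M^{[|M|-\ell',|M|-\ell]}$. Its pair of ranks now sums to $2|M|-\ell-\ell'\ge|M|$, so the scaling identity no longer lands inside the target basis: the source basis $B_{|M|-\ell'}^M$ carries extra blocks $A_m^M$ with $\ell<m$, whose images are nonzero and do not coincide with basis vectors of $B_{|M|-\ell}^M$. These extra columns are not redundant; already for $M=(2,1)$, $\ell=0$, $\ell'=2$ the matrix in the adapted bases is $\begin{pmatrix}6&2\end{pmatrix}$, and it is precisely the extra entry that reduces the invariant from $6$ to $\gcd(6,2)=2$, matching the clean value. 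Note that the transpose identity from the proof of Lemma~\ref{lem:base-change-higher} only gives $\snf\left(U_M^{[|M|-\ell',|M|-\ell]}\right)=\snf\left(D_M^{[\ell,\ell']}\right)$, and the right-hand operator is itself in the non-clean (for a lowering operator) range $\ell+\ell'\le|M|$; likewise complementation merely re-expresses one non-clean operator as the other, so no formal symmetry reduces the mirror to the clean case, and for $\ell+\ell'<|M|$ the mirror really is a different operator whose coincidence of Smith form is substantive.

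I would therefore attack the mirror directly by induction on the number of chains $k$, exploiting the product decomposition $P^M=P^{M''}\times(\text{chain }M_k)$ and the recursive construction $A_n^M=A_n^{M'}\cup A_{n-M_k}^{M''}\cdot x_k^{M_k}$ from Lemma~\ref{lem:base-change-main}, tracking how the diagonal entries and the extra columns distribute over the two factors. The crux of this induction is to show that the greatest common divisors of the $j\times j$ minors of the non-clean matrix agree with the partial products $\prod_{m\le i}\big((\ell'-\ell)!{\ell'-m\choose\ell-m}\big)^{\#P_m^M-\#P_{m-1}^M}$ coming from the clean case. Here one can hope to pin down the product of all invariants independently via a Casimir-type determinant computation $\det\left(U_M^{[n,n']}D_M^{[n,n']}\right)$ as in the proof of Lemma~\ref{lem:base-change-higher}, and then use divisibility against the already-computed clean operator $U_M^{[\ell,\ell']}$ to identify the individual invariants.
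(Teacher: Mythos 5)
Your first half---the computation of $\tilde{\snf}\bigl(U_M^{[\ell,\ell']}\bigr)$ via the adapted bases of Lemmas \ref{lem:base-change-main} and \ref{lem:base-change-higher}---is exactly the paper's argument, including the correct observation that the hypothesis $\ell+\ell'\leq|M|$ is what forces each image $\frac{U_M^{\ell'-m}}{(\ell'-m)!}f$ to land inside $B_{\ell'}^M$, so that the matrix becomes diagonal-with-zero-columns after unimodular base changes. The gap is the mirror operator: you never prove $\tilde{\snf}\bigl(U_M^{[|M|-\ell',|M|-\ell]}\bigr)=(\ell'-\ell)!\,\snf(\mc{D})$, only sketch an induction on $k$ that you ``hope'' closes---and it cannot close, because for general $M$ that equality is \emph{false}. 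Take $M=(2)$, $\ell=0$, $\ell'=1$ (all hypotheses hold): then $U_M^{[0,1]}=\begin{pmatrix}1\end{pmatrix}$ while $U_M^{[1,2]}=\begin{pmatrix}2\end{pmatrix}$, so the mirror has $\tilde{\snf}$ equal to $(2)$, not the clean value $(1)$; likewise $M=(2,2)$, $(\ell,\ell')=(0,1)$ gives mirror matrix $\begin{pmatrix}2\\2\end{pmatrix}$ with $\tilde{\snf}=(2)$. Hence the ``crux'' of your plan---that the gcds of $j\times j$ minors of the non-clean matrix match the clean partial products---is unprovable at this level of generality, and the Casimir determinant step also founders because the mirror matrices need not be square. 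Your $M=(2,1)$, $(\ell,\ell')=(0,2)$ computation of $\begin{pmatrix}6&2\end{pmatrix}$ is correct, but its happy outcome is special to staircase-like shapes.

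To your credit, your diagnosis of the obstruction is sharper than the paper's own treatment: the paper disposes of the mirror case in one sentence by asserting that $U_M^{[\ell,\ell']}$ and $U_M^{[|M|-\ell',|M|-\ell]}$ are transposes, which is precisely the claim you rightly reject (already for $M=(2,1)$, $(\ell,\ell')=(0,1)$ the two matrices are $\begin{pmatrix}1&1\end{pmatrix}$ and $\begin{pmatrix}1\\2\end{pmatrix}$); the honest transpose is $D_M^{[\ell,\ell']}$, exactly as in the proof of Lemma \ref{lem:base-change-higher}. The lemma's application nevertheless survives, because Theorem \ref{thm:snf} uses only the clean case with $M=\rho=(n-1,\ldots,1,0)$: there the mirror equalities for $\nabla$ and $\Delta$ come from Proposition \ref{prop:w0-symmetry} (the $w_0$-symmetry of the code and $\nabla$ weights in the permutation basis) combined with the unimodular change between the padded Schubert and monomial bases, and running that chain of identifications backwards establishes the mirror equality of the lemma \emph{for the staircase}---an external symmetry of $S_n$, not an internal symmetry of a product of chains. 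So the correct repair of your proof (and of the paper's) is: prove the clean case as you did, and for the middle term either restrict to $M=\rho$ and invoke Proposition \ref{prop:w0-symmetry} as in the proof of Theorem \ref{thm:snf}, or replace $U_M^{[|M|-\ell',|M|-\ell]}$ by $D_M^{[|M|-\ell',|M|-\ell]}$, for which the equality is immediate by transposition. A from-scratch induction on $k$ for the stated middle term is a dead end.
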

\begin{proof}
As matrices, $U_M^{[\ell,\ell']}$ and $U_M^{[|M|-\ell',|M|-\ell]}$ are transposes of each other, so it suffices to consider $U_P^{[\ell,\ell']}$.

Instead of the monomial basis $P_n^M$ for $\Z P_n^M$, we use the basis $B_n^M$ described in Lemma~\ref{lem:base-change-main} for $n\leq|M|/2$ and Lemma~\ref{lem:base-change-higher} for $n\geq|M|/2$ (See Figure~\ref{fig:labeled-chain-example} for a visualization of the order raising operator $U_M$ in this new basis). The smith normal form of $U_M^{[\ell,\ell']}$ remains unchanged because the change of basis is integral. In this new basis, as long as $\ell+\ell'\leq|M|$, we see from the definition of $B_n^M$'s that $U_M^{[\ell,\ell']}$ becomes diagonal in the maximal square submatrix and 0 elsewhere. The diagonal consists of $\# A_i^P=\# P_i^M-\# P_{i-1}^M$ copies of $(\ell'-i)!/(\ell-i)!=(\ell'-\ell)!{{\ell'-i}\choose{\ell-i}}$.
\end{proof}

\begin{figure}[h!]
\begin{tikzpicture} 
\node[draw,shape=circle,fill=black,scale=0.6](a0) at (0,0) {};
\node[draw,shape=circle,fill=black,scale=0.6](a1) at (0,1) {};
\node[draw,shape=circle,fill=black,scale=0.6](a2) at (0,2) {};
\node[draw,shape=circle,fill=black,scale=0.6](a3) at (0,3) {};
\node[draw,shape=circle,fill=black,scale=0.6](a4) at (0,4) {};
\node[draw,shape=circle,fill=black,scale=0.6](a5) at (0,5) {};
\node[draw,shape=circle,fill=black,scale=0.6](a6) at (0,6) {};

\draw (a0) -- node[left] {$1$} (a1);
\draw (a1) -- node[left] {$2$} (a2);
\draw (a2) -- node[left] {$3$} (a3);
\draw (a3) -- node[left] {$4$} (a4);
\draw (a4) -- node[left] {$5$} (a5);
\draw (a5) -- node[left] {$6$} (a6);

\node[draw,shape=circle,fill=black,scale=0.6](b1) at (1,1) {};
\node[draw,shape=circle,fill=black,scale=0.6](b2) at (1,2) {};
\node[draw,shape=circle,fill=black,scale=0.6](b3) at (1,3) {};
\node[draw,shape=circle,fill=black,scale=0.6](b4) at (1,4) {};
\node[draw,shape=circle,fill=black,scale=0.6](b5) at (1,5) {};

\draw (b1) -- node[left] {$1$} (b2);
\draw (b2) -- node[left] {$2$} (b3);
\draw (b3) -- node[left] {$3$} (b4);
\draw (b4) -- node[left] {$4$} (b5);

\node[draw,shape=circle,fill=black,scale=0.6](c1) at (-1,1) {};
\node[draw,shape=circle,fill=black,scale=0.6](c2) at (-1,2) {};
\node[draw,shape=circle,fill=black,scale=0.6](c3) at (-1,3) {};
\node[draw,shape=circle,fill=black,scale=0.6](c4) at (-1,4) {};
\node[draw,shape=circle,fill=black,scale=0.6](c5) at (-1,5) {};

\draw (c1) -- node[left] {$1$} (c2);
\draw (c2) -- node[left] {$2$} (c3);
\draw (c3) -- node[left] {$3$} (c4);
\draw (c4) -- node[left] {$4$} (c5);

\node[draw,shape=circle,fill=black,scale=0.6](d2) at (2,2) {};
\node[draw,shape=circle,fill=black,scale=0.6](d3) at (2,3) {};
\node[draw,shape=circle,fill=black,scale=0.6](d4) at (2,4) {};

\draw (d2) -- node[left] {$1$} (d3);
\draw (d3) -- node[left] {$2$} (d4);

\node[draw,shape=circle,fill=black,scale=0.6](e2) at (-2,2) {};
\node[draw,shape=circle,fill=black,scale=0.6](e3) at (-2,3) {};
\node[draw,shape=circle,fill=black,scale=0.6](e4) at (-2,4) {};

\draw (e2) -- node[left] {$1$} (e3);
\draw (e3) -- node[left] {$2$} (e4);

\node[draw,shape=circle,fill=black,scale=0.6](f3) at (3,3) {};

\draw[dashed] (f3)--(a4);
\draw[dashed] (f3)--(b4);
\draw[dashed] (f3)--(c4);
\draw[dashed] (f3)--(d4);
\draw[dashed] (f3)--(e4);

\draw[dashed] (d4)--(a5);
\draw[dashed] (d4)--(b5);
\draw[dashed] (d4)--(c5);
\draw[dashed] (e4)--(a5);
\draw[dashed] (e4)--(b5);
\draw[dashed] (e4)--(c5);

\draw[dashed] (b5)--(a6);
\draw[dashed] (c5)--(a6);

\end{tikzpicture}
\caption{The order raising operator $U_M$ in the basis $B_n^M$, where $M=[3,2,1]$.  The dashed edges have weights which we do not describe explicitly, but they are immaterial to $U_M^{[\ell,\ell']}$ when $\ell+\ell' \leq |M|$.}
\label{fig:labeled-chain-example}
\end{figure}
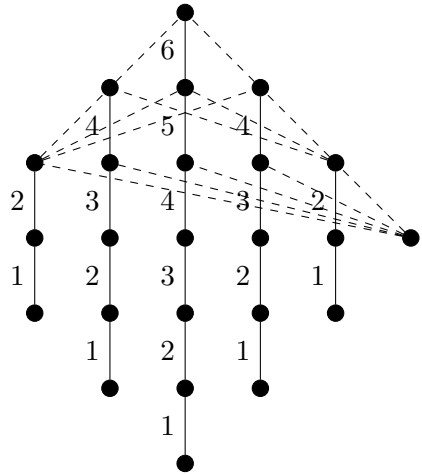

\begin{proof}[Proof of Theorem \ref{thm:snf}]
Fix $0\leq \ell<\ell'\leq N$ with $\ell+\ell'\leq N$. According to Proposition~\ref{prop:w0-symmetry}, in the padded Schubert basis the matrices $\Delta^{[\ell,\ell']}$ and $\Delta^{[N-\ell',N-\ell]}$ are transposes. Therefore, $\tilde\snf\Delta^{[\ell,\ell']}=\tilde\snf\Delta^{[N-\ell',N-\ell]}$. Similarly, $\tilde\snf\nabla^{[\ell,\ell']}=\tilde\snf\nabla^{[N-\ell',N-\ell]}$.  By Corollary 6 of \cite{Hamaker2018}, the change of basis matrix between the padded Schubert and monomial bases for $\tilde{V}$ lies in $GL(\mathbb{Z})$, and thus Smith normal forms are the same in either basis.  Viewing $\Delta^{[N-\ell',N-\ell]}$ and $\nabla^{[\ell,\ell']}$ in the monomial basis, they are transposes, and thus have the same $\tilde\snf$. Finally, in the monomial basis, their Smith normal forms are those of $U_M^{[\ell,\ell']}$ with $M=(n-1,n-2,\ldots,1,0)$. Applying Lemma~\ref{lem:product-snf} gives us the desired result.
\end{proof}

\section*{Acknowledgements}
The authors wish to thank Alex Postnikov and Richard Stanley for helpful conversations.

\bibliographystyle{plain}
\bibliography{arxiv-v2}

\end{document}